\newtheorem{theorem}{Theorem}[section]
\newtheorem{corollary}[theorem]{Corollary}
\newtheorem{lemma}[theorem]{Lemma}
\newtheorem{remark}[theorem]{Remark}
\newtheorem{definition}[theorem]{Definition}
\numberwithin{equation}{section}
\def\N{\mathbb N}
\def\T{\mathbb T}
\def\Z{\mathbb Z}
\begin{document}
\large
\title[Exponential convergence of weighted Birkhoff average]{Exponential convergence of weighted Birkhoff average}

\author{Zhicheng Tong}
\address{\scriptsize (Z. C. Tong)~College of Mathematics, Jilin University, Changchun 130012, P.R. China.}
\email{tongzc20@mails.jlu.edu.cn}

\author{Yong Li}
\address{\scriptsize  (Y. Li)~Institute of Mathematics, Jilin University, Changchun 130012, P.R. China. School of Mathematics and Statistics, Center for Mathematics and Interdisciplinary Sciences, Northeast Normal University, Changchun, Jilin 130024, P.R.China.}
\email{liyong@jlu.edu.cn}
\thanks{The second author (Y. Li) was supported in part by National Basic Research Program of China Grant (2013CB834100) and NSFC Grant (12071175, 11171132,  11571065), Project of Science and Technology Development of Jilin Province (2017C028-1, 20190201302JC), and Natural Science Foundation of Jilin Province (20200201253JC)}

\subjclass[2020]{Primary 37A25; Secondary 37A45}

\keywords{Birkhoff ergodic theorem; Irrational rotation; Polynomial convergence; Exponential convergence.}

\begin{abstract}
In this paper, we consider the polynomial and exponential convergence rate of weighted Birkhoff averages of irrational rotations on tori. It is shown that these can be achieved for finite and infinite dimensional tori which correspond to the quasiperiodic and almost periodic dynamical systems respectively,  under certain balance between the nonresonant condition and the decay rate of the Fourier coefficients. Diophantine rotations with finite and infinite dimensions are provided as examples. For the first time, we prove the universality of exponential convergence and arbitrary polynomial convergence in the quasiperiodic case and almost periodic case under analyticity respectively.
\end{abstract}

\maketitle
\tableofcontents

\section{Introduction}
The classical Birkhoff ergodic theorem asserts that for ergodic dynamical systems, the time average of a function $ f $ evaluated along a trajectory of length $ N $ converges to the space average, i.e., the integral of $ f $ over the space. Namely, assume $ T: X \to X $ is a map on a topological space $ X $ with a  probability measure $ \mu $ for which $ T $ is invariant. Then for a fixed point $ x \in X $ and a function $ f $ on $ X $, we define the long time average of $ f $ as
\begin{equation}\label{BBB}
	{\mathrm{B}_N}\left( f \right)\left( x \right): = \frac{1}{N}\sum\limits_{n = 0}^{N - 1} {f\left( {{T^n}\left( x \right)} \right)} ,
\end{equation}
which we call the Birkhoff average of $ f $. Actually, it has a long history to study the convergence of \eqref{BBB}, see  survey articles   \cite{Mackey,ccm}.  The von Neumann ergodic theorem shows that \eqref{BBB} converges to the integral $ \int_X {fd\mu }  $ in the $ L^2 $ norm, if $ f \in L^2(X,\mu) $, $ \mu $ is a  probability measure on $ X $, $ T $ preserves $ \mu $ and is ergodic, see Theorem 4.5.2 given in \cite{MR1963683}.  The Birkhoff  ergodic theorem weakens the restriction of the former on $ f $, only $ f \in L^1(X,\mu) $ is required, then \eqref{BBB} converges to  $ \int_X {fd\mu }  $, $ \mu\text{-a.e.} $ on $ X $. These theorems are of great importance both in mathematics and statistical mechanics. However, the convergence rate of the Birkhoff average  may be very slow. It can be proved that for any non-constant $ f $, there exists a constant independent of $ N $, such that
\begin{equation}\label{renyi2}
	\left| {{\mathrm{B}_N}\left( f \right)\left( x \right) - \int_X {fd\mu } } \right| \geqslant \frac{C}{N}
\end{equation}
holds for infinitely many $ N $, see \cite{MR3755876}. In fact,  many mathematicians who have worked on Birkhoff  ergodic theorem know that it is not possible to prove any general positive result about the speed of convergence in \eqref{BBB}, and later it has been shown in \cite{MR0510630} that for any null-sequence $ \left\{ {{\omega _n}} \right\}_{n = 1}^\infty  $ of positive reals, there exists a continuous function $ f $ such that
\begin{equation}\label{renyi}
	\mathop {\lim \sup }\limits_{N \in {\mathbb{N}^ + }} \omega _N^{ - 1}\left| {{\mathrm{B}_N}\left( f \right)\left( x \right) - \int_X {fd\mu } } \right| =  + \infty ,\;\;a.e.\;
\end{equation}
And the analogous result holds also for norm-convergence.

Obviously, the slow rate of  the convergence of the  Birkhoff average \eqref{BBB} makes numerical computations in real problems extremely difficult, although the convergence is guaranteed in theory. Aiming to get high precision numerical results, some computations may even take billions of years to complete, see Subsection 1.9 in \cite{MR3755876} and \cite{MR3718733}. This forces ones  to find a faster convergence method, from which some weighted Birkhoff averages have been derived.

Recently, a weighted method of non-uniform distribution was proposed in \cite{MR3755876} to study ergodicity in  quasiperiodic dynamical systems, which surprisingly confirms that when $ f:\mathbb{T}^d \to E $ is sufficiently smooth, provided $ d \in \mathbb{N}^+ $ and $ \dim E <  + \infty  $, and the rotation vector on $ \T^d $ satisfies the  Diophantine condition, then the weighted Birkhoff average could converge at an  arbitrarily polynomial rate which they called super-convergence. \textit{This is indeed a breakthrough.} See \cite{MR3718733} for numerical simulation of some physical models. At this point, it is therefore natural that ones should consider the following questions step by step:

\begin{itemize}
\item[(\textbf{Q1})] \textit{How about the convergence type in the almost periodic case?}

\item[(\textbf{Q2})] \textit{Could faster convergence than arbitrary polynomial's type be achieved, such as exponential's type?}

\item[(\textbf{Q3})] \textit{Can we show certain  universality of arbitrary polynomial convergence and exponential convergence via analyticity?}
\end{itemize}

These questions are quite nontrivial. On the one hand, \textit{the almost periodic case is fundamentally different from the quasiperiodic case} in that the rotations of the former  are infinite-dimensional vectors, while the latter only deals with  finite-dimensional rotations, see \cite{Herman} and \cite{Moser} for relevant work on these two aspects. Their topological properties are completely different, such as the infinite-dimensional torus has no compactness.  Additionally, the data processing is even more different, that is, the almost periodic case may lead to \textit{Curse of Dimensionality}. On the other hand, (\textbf{Q1}), (\textbf{Q2}) and (\textbf{Q3}) are crucial both theoretically and applicability, and they also explain Laskar's simulation results \cite{MR1720890} (Remark 2 in Appendix, p.146) on quasiperiodic flows, that is, convergence faster than arbitrary polynomial's type. In this paper, we  make further developments following  \cite{MR3755876} and answer these questions.

This paper is organized as follows. In Section \ref{sec2}, we show that the weighted Birkhoff average converges at an arbitrary polynomial rate for the quasiperiodic case ($ \mathbb{T}^d $) and the almost periodic case ($ \mathbb{T}^\infty $), as long as the nonresonance of the irrational rotation vector and the Fourier coefficients of $ f $ satisfy certain conditions, i.e., (\textbf{H1}) and (\textbf{H2}), respectively. Roughly speaking, the rotating vector might satisfy weaker nonresonance than the usual Diophantine one, particularly involving with infinite-dimensional cases. In Section \ref{sec3}, we present our main results in this paper, which further show that the weighted Birkhoff average can indeed  converge at an exponential rate, by requiring stronger conditions than that before. Diophantine rotations  are constructed as examples at this point, including finite and infinite dimensional cases. As a corollary to the above results, we show that under the assumption of analyticity, exponential convergence and arbitrary polynomial convergence are universal in the case of quasiperiodic and almost periodic,  respectively. It is worth mentioning that some assumptions can be removed in the  cases without small divisors, in dealing with exponential convergence. It seems inevitable, however, that the difficulty of analyzing exponential convergence and circumventing the limitation of dimensionality lead to technical complications.

\section{Convergence of arbitrary polynomial  rate type} \label{sec2}
\subsection{Finite-dimensional case $ \mathbb{T}^d $}
Das and Yorke \cite{MR3755876} proved arbitrary polynomial convergence in the quasiperiodic case  via Diophantine rotations and $ C^\infty $ regularity. Following their idea, we extend the results to the general nonresonant conditions and regularity of functions, as well as the continuous case. To introduce the results we first give some notions, which are basic to our discussion.
\begin{definition}
	A function $ \Delta :\left[ {1, + \infty } \right) \to \left[ {1, + \infty } \right) $  is said to be an approximation function, if it is continuous, strictly monotonic increasing, and satisfies  $  \Delta(+\infty) =+\infty$.
\end{definition}

\begin{definition}[Finite-dimensional nonresonant condition] \label{Fi}
	An irrational vector $ \rho \in \mathbb{T}^d $ is said to be nonresonant if there exist  $ \alpha >0 $ and an approximation function $ \Delta $ such that
	\begin{itemize}

		\item[($ a $)] The discrete case
		\begin{equation}\label{nonresonant}
			\left| {k \cdot \rho  - n} \right| \geqslant \frac{\alpha }{{\Delta \left( {||k||} \right)}},\;\;\forall 0 \ne k \in {\mathbb{Z}^d},\;\;\forall n \in \mathbb{Z};
		\end{equation}
		\item[($ b $)] The continuous case
		\begin{equation}\label{nonresonant2}
			\left| {k \cdot \rho } \right| \geqslant \frac{\alpha }{{\Delta \left( {||k||} \right)}},\;\;\forall 0 \ne k \in {\mathbb{Z}^d},
		\end{equation}
	\end{itemize}
	where $\left\|k\right\|=|k_1|+\cdots+|k_d|$.
\end{definition}
\begin{remark}
	Obviously \eqref{nonresonant} implies \eqref{nonresonant2}, which means that in the continuous case one has a lower restriction on rotation vectors and thus we state them separately.
\end{remark}
\begin{remark}
	We say that $ \rho\in \mathbb{T}^d $ satisfies the Finite-dimensional Diophantine condition, if
	\begin{equation}\label{Finite-dimensional Diophantine condition}
		\Delta(x)=x^\tau,\;\;\tau > d -1.
	\end{equation}
\end{remark}
%\begin{remark}
%In fact, all rational independent rotation vectors are nonresonant, i.e., have the form  \eqref{nonresonant}.
%\end{remark}

\begin{definition}
	Assume $ \left( {\mathcal{B},|| \cdot ||{_\mathcal{B}}} \right) $ is a %real or complex
	Banach function space (could be infinite-dimensional), and  $ f:{\mathbb{T}^d} \to \mathcal{B} $ with
	\begin{equation}\label{ff}
		%	f\mathop  =  \sum\limits_{0 \ne k \in {\mathbb{Z}^d}} {{{\hat f}_k}{e^{2k\pi i\theta }}} ,\;\;{{\hat f}_k} = \int_{{\mathbb{T}^d}} {f\left( {\hat \theta } \right){e^{ - 2k\pi i\hat \theta }}d\hat \theta },
		f = \sum\limits_{ k \in {\mathbb{Z}^d}} {{{\hat f}_k}{e^{2\pi ik \cdot \theta }}} ,\;\;{{\hat f}_k} = \int_{{\mathbb{T}^d}} {f( {\hat \theta } ){e^{ - 2\pi ik \cdot \hat \theta }}d\hat \theta },
	\end{equation}
	where the first "$ = $" represents equality in the sense of the norm $ || \cdot ||{_\mathcal{B}} $.
	Now we define the following  space %we need
	\begin{equation}\label{Bdelta}
		%{\mathcal{B}_{\tilde \Delta }}: = \left\{ {f \in {L^p}\left( {{\mathbb{T}^d},\boldsymbol{\lambda} } \right):\mathop {\sup }\limits_{0 \ne k \in {\mathbb{Z}^d}} \tilde \Delta \left( {||k||} \right)\left| {{\hat f_k}} \right| <  + \infty } \right\}
		%{\mathcal{B}_{\tilde \Delta }}: = \left\{ {f:f = \sum\limits_{0 \ne k \in {\mathbb{Z}^d}} {{{\hat f}_k}{e^{2\pi i\theta }}} \;a.e.\;in\;{\mathbb{T}^d},\mathop {\sup }\limits_{0 \ne k \in {\mathbb{Z}^d}} \tilde \Delta \left( {||k||} \right)\left| {{{\hat f}_k}} \right| <  + \infty } \right\}
		{\mathcal{B}_{\tilde \Delta }}: = \left\{ {f:{\mathbb{T}^d} \to \mathcal{B} :\text{$ f  $ satisfies \eqref{ff} and $ \mathop {\sup }\limits_{0 \ne k \in {\mathbb{Z}^d}} \tilde \Delta \left( {||k||} \right)\| {{{\hat f}_k}} \|_{_\mathcal{B}} <  + \infty $} } \right\}
	\end{equation}
	for a given approximation function $ {\tilde \Delta } $. %where $ \boldsymbol{\lambda} $  is the Lebesgue probability measure. Here $ \hat  f_k $ denotes the Fourier coefficients of $ f $, and
	% Here $ \left|  \cdot  \right|_\mathcal{B} $ denotes the norm.
\end{definition}
%\begin{remark}
%The space $ \mathcal{B} $ above can be chosen as $ L^p(\mathbb{T}^d) $ for example, in which case $ f = \sum\limits_{0 \ne k \in {\mathbb{Z}^d}} {{{\hat f}_k}{e^{2\pi i\theta }}} \;a.e.\;in\;{\mathbb{T}^d} $ holds for any $ f \in \mathcal{B} $. %But in fact, we do not require $ \mathcal{B} $ to be a Banach space.
%\end{remark}

%Lebesgue probability measure $ \boldsymbol{\lambda} $ on $ \mathbb{T}^d $ is the unique invariant probability measure for any  irrational rotation $ {T_\rho }: = \theta  \to \theta  + \rho \bmod 1 $. A map $ T:{\mathbb{T}^d} \to {\mathbb{T}^d} $ therefore has a unique invariant probability measure $ \mu $, where $ \rho $ is nonresonant.  %which is defined as $ \mu \left( U \right): = \boldsymbol{\lambda} \left( {{h^{ - 1}}\left( U \right)} \right) $ for any Borel set $ U \subseteq {X_0} $.
%The above $ T $ is said to be $ d $-dimensionally $ C^m $
%A map T : X → X is said to be d-dimensionally Cm quasiperiodic on a set X0 ⊆ X for some
%d ∈ N iff there is a Cm-diffeomorphism h : Td → X0, such that,

For a given map $ {T_\rho }:{\mathbb{T}^d} \to {\mathbb{T}^d} $ with $ {T_\rho }\left( \theta  \right) = \theta  + \rho \bmod 1 $  in each coordinate ($\rho$ is an  irrational nonresonant vector) and a function $ f \in \mathcal{B}_{\tilde \Delta }$, define the weighted Birkhoff average as
\begin{equation}\label{birkhoff}
	{\mathrm{WB}_N}\left( f \right)\left( \theta  \right): = \frac{1}{{{A_N}}}\sum\limits_{n = 0}^{N - 1} {w\left( {\frac{n}{N}} \right)f\left( {T_\rho ^n}(\theta) \right)} ,\;\;{A_N} = \sum\limits_{n = 0}^{N - 1} {w\left( {\frac{n}{N}} \right)} ,\;\;\theta \in \mathbb{T}^d,
\end{equation}
where $ w$ is a $ \mathcal{C}_0^m\left( {\left[ {0,1} \right]} \right) $ weighting function with $ 2 \leqslant m \leqslant \infty  $, that is, $ w \in {C^\infty }\left( {\left[ {0,1} \right]} \right) $, $ {w^{\left( k \right)}}\left( 0 \right) = {w^{\left( k \right)}}\left( 1 \right) = 0 $ for all $ 0 \leqslant k \leqslant m $, $ w(x)>0 $ for $ x\in (0,1) $, and $ \int_0^1 {w\left( s \right)ds}  =1 $.

We make the following assumption:

(\textbf{H1})\ The approximation functions given in \eqref{nonresonant}, \eqref{nonresonant2} and \eqref{Bdelta} satisfy the integrability condition:
\[\int_1^{ + \infty } {\frac{{{r^{d - 1}}{\Delta ^m}\left( r \right)}}{{\tilde \Delta \left( r \right)}}dr}  <  + \infty .\]

\begin{theorem}\label{T1}
	Give $ f \in \mathcal{B}_{\tilde \Delta }  $, and $ \rho $ satisfies the Finite-dimensional nonresonant condition in Definition \ref{Fi}. Assume (\textbf{H1}). Then there hold
	\begin{equation}\label{a.e.}
		%	\left| {W{B_N}\left( f \right)\left( \theta  \right) - \int_{{\mathbb{T}^d}} {f\left( \hat \theta  \right)d \hat\theta } } \right| \leqslant \frac{{{C^ * }}}{{{N^m}}},\;\;a.e.\;in\;{\mathbb{T}^d},
		%\|W{B_N}\left( f \right)\left( \theta  \right) - \int_{{\mathbb{T}^d}} {f\left( {\hat \theta } \right)d\hat \theta } \|_{\mathcal{B}} \leqslant \frac{{{C^ * }}}{{{N^m}}},
		{\left\| {{\mathrm{WB}_N}\left( f \right)\left( \theta  \right) - \int_{{\mathbb{T}^d}} {f( {\hat \theta } )d\hat \theta } } \right\|_\mathcal{B}} \leqslant \frac{{{C_1 }}}{{{N^m}}},\;\;N \geqslant 1,
	\end{equation}
	and
	\begin{equation}\label{T1-2}
		{\left\| {\frac{1}{T}\int_0^T {w \left( t/T \right)f\left( {\rho t + \theta } \right)dt}  - \int_{{\mathbb{T}^d}} {f( {\hat \theta } )d\hat \theta } } \right\|_\mathcal{B}} \leqslant \frac{{{C_1 }}}{{{T^m}}},\;\;T \geqslant 1,
	\end{equation}
	where the positive constant $ C_1>0 $ only depends on $ f,\Delta ,\tilde \Delta ,w,\alpha ,m,d $.
\end{theorem}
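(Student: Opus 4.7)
The plan is to work in Fourier space. Writing $f=\sum_{k\in\Z^d}\hat f_k e^{2\pi ik\cdot\theta}$ and noting that $\int_{\T^d}f\,d\hat\theta=\hat f_0$, we observe that for each Fourier mode, applying $T_\rho$ simply multiplies $e^{2\pi ik\cdot\theta}$ by $e^{2\pi ik\cdot\rho}$. Hence
\[
\mathrm{WB}_N(f)(\theta)-\hat f_0
=\sum_{k\neq 0}\hat f_k e^{2\pi ik\cdot\theta}\,S_N(k),\qquad
S_N(k):=\frac{1}{A_N}\sum_{n=0}^{N-1}w(n/N)\,z_k^{n},\;\;z_k:=e^{2\pi ik\cdot\rho}.
\]
The whole theorem thus reduces to a sufficiently sharp pointwise bound on the trigonometric sum $S_N(k)$, followed by a term-by-term summation in $k$ controlled by (\textbf{H1}).

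The first (and main) step is the key lemma: for every $0\neq k\in\Z^d$,
\[
|S_N(k)|\leq \frac{C(w,m)}{N^m|z_k-1|^m}.
\]
I would prove this by $m$-fold Abel summation (discrete integration by parts). At each iteration, the boundary contributions at $n=0$ and $n=N$ carry factors $w^{(j)}(0)$ and $w^{(j)}(1-1/N)$, which vanish (or are of size $O(N^{-m})$) thanks to the hypothesis $w^{(j)}(0)=w^{(j)}(1)=0$ for all $0\le j\le m$; meanwhile each discrete differencing of $w(n/N)$ produces a factor $1/N$, and each surviving partial sum of the geometric series $\sum z_k^n$ is controlled by $1/|z_k-1|$. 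After $m$ rounds one collects the advertised bound, with an extra elementary estimate $A_N\ge cN$ (which follows from $w>0$ on $(0,1)$ and standard Riemann-sum approximation) absorbed into the constant. This is the technical core; everything else is bookkeeping.

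The second step uses the nonresonant condition and (\textbf{H1}). Since $|z_k-1|=2|\sin(\pi k\cdot\rho)|\ge 4\,\mathrm{dist}(k\cdot\rho,\Z)\ge 4\alpha/\Delta(\|k\|)$ by Definition \ref{Fi}(a), the lemma gives $|S_N(k)|\le C'\Delta(\|k\|)^m/(N^m\alpha^m)$. Combining with $\|\hat f_k\|_{\mathcal B}\le M/\tilde\Delta(\|k\|)$ from the definition of $\mathcal B_{\tilde\Delta}$ and summing,
\[
\bigl\|\mathrm{WB}_N(f)(\theta)-\hat f_0\bigr\|_{\mathcal B}
\le\frac{C'M}{N^m\alpha^m}\sum_{k\neq 0}\frac{\Delta(\|k\|)^m}{\tilde\Delta(\|k\|)}.
\]
Grouping by $\|k\|=r$ contributes $O(r^{d-1})$ lattice points, so the series is comparable to $\int_1^{+\infty}r^{d-1}\Delta(r)^m/\tilde\Delta(r)\,dr$, which is finite by (\textbf{H1}). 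This produces \eqref{a.e.} with $C_1$ depending only on $f,\Delta,\tilde\Delta,w,\alpha,m,d$.

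For the continuous estimate \eqref{T1-2}, I would repeat the argument replacing the Abel summation by $m$ genuine integrations by parts of $\int_0^T w(t/T)e^{2\pi it\,k\cdot\rho}\,dt$. The boundary terms again vanish identically because of $w^{(j)}(0)=w^{(j)}(1)=0$, so no error creeps in; the only change is that the nonresonance needed is the weaker bound \eqref{nonresonant2}, since the continuous sum detects $k\cdot\rho$ itself rather than its distance to $\Z$. Summing the resulting modewise bound with (\textbf{H1}) yields the $T^{-m}$ rate, with a constant of the same dependence. The main difficulty throughout is the book-keeping in the $m$-fold Abel summation (tracking constants $C(w,m)$ that depend on bounds of $w^{(j)}$); the conceptual picture, oscillation versus smoothness balanced by nonresonance, is straightforward.
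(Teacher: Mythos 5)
Your proof is correct, but it takes a genuinely different route from the paper's for the discrete case. The paper's key trick is the Poisson summation formula: after expanding in Fourier modes, it rewrites $\sum_{n=0}^{N-1} w(n/N)e^{2\pi i n k\cdot\rho}$ as $\sum_{n\in\Z}\int_{-\infty}^{\infty} w(t/N)e^{2\pi i t k\cdot\rho}e^{-2\pi i t n}\,dt$ (valid since $w$ vanishes outside $[0,1]$), then performs $m$ genuine integrations by parts on $\int_0^1 w(y)e^{2N\pi i(k\cdot\rho-n)y}\,dy$, where the hypothesis $w^{(j)}(0)=w^{(j)}(1)=0$ kills the boundary terms exactly. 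The price is a double sum over $n\in\Z$, controlled by noting that only the closest integer $n_k$ to $k\cdot\rho$ produces a small divisor while the rest give $\sum_n (|n|+1/2)^{-m}$, which requires $m\ge 2$. You instead apply $m$-fold Abel summation directly to the discrete sum, invoking $\mathrm{dist}(k\cdot\rho,\Z)\gtrsim\alpha/\Delta(\|k\|)$ only for the nearest integer. This is more elementary (no Fourier-analytic machinery), but you inherit nonvanishing boundary contributions of the form $(\delta^{j-1}g)(N-1)\approx N^{-(j-1)}w^{(j-1)}(1-O(1/N))$, which you then must show are $O(N^{-(m+1)})$ via Taylor expansion at the endpoints — the bookkeeping you flag as "the technical core." The paper's Poisson route trades that bookkeeping for an extra summation over $n$; yours trades the summation over $n$ for endpoint Taylor estimates. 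Both yield the same modewise bound, and your subsequent use of the $\mathcal B_{\tilde\Delta}$ decay, the lattice-point count $O(r^{d-1})$ at $\|k\|=r$, and (\textbf{H1}) matches the paper's step. Your treatment of the continuous case \eqref{T1-2}, including the observation that only the weaker condition \eqref{nonresonant2} is needed there because no sum over $n$ arises, coincides with the paper's.
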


Let us make some comments.

\begin{itemize}

	\item[(1)] As long as the weighting function $ w $ is sufficiently smooth, the convergence rate of weighted Birkhoff  average can reach arbitrarily polynomial convergence. For example, we could take
	\begin{equation}\label{zhishupingjun}
		\tilde{w}\left( x \right):= \left\{ \begin{array}{ll}
			{\left( {\int_0^1 {\exp \left( { - {s^{ - p}}{{\left( {1 - s} \right)}^{ - q}}} \right)ds} } \right)^{ - 1}}\exp \left( { - {x^{ - p}}{{\left( {1 - x} \right)}^{ - q}}} \right),&x \in \left( {0,1} \right),  \hfill \\
			0,&x = 0,1, \hfill \\
		\end{array}  \right.
	\end{equation}
	for any $ p,q>0 $. One can easily verify that $ \tilde{w} $ is $ \mathcal{C}_0^\infty \left( {\left[ {0,1} \right]} \right) $. It turns out that the relationship between the computational convergence rate and $ p,q $ is not particularly clear, see \cite{MR3718733}.
	
	\item[(2)] Spatial structure \eqref{Bdelta} of $ \mathcal{B}_{\tilde \Delta } $ and condition (\textbf{H1}) together guarantee the uniform convergence of weighted Birkhoff average \eqref{a.e.}. However, after removing \eqref{ff} in \eqref{Bdelta}, then \eqref{a.e.} may only hold a.e. on $ \mathbb{T}^d $, because for $ f $ in general, the  Fourier series of $ f $ does not necessarily pointwise converge to $ f $, so we cannot obtain uniform convergence of the weighted Birkhoff average with respect to all $ \theta \in \mathbb{T}^d $. If $ f \in {\mathbb{R}^l} $ with $ l \in \mathbb{N}^+ $ is continuous and satisfies $ f\left( 0 \right) = f\left( 1 \right) $ in $ \mathbb{R}^l $ (here $ 0 = \left( {0, \ldots ,0} \right),1 = \left( {1, \ldots ,1} \right) $ in $ \mathbb{T}^d $), then \eqref{a.e.} could indeed converge uniformly. At this point, if we further assume that:
	
	(i) $ f $ is $ C^M $ smooth, then $ | {{\hat f_k}} | \leqslant {C_{f,M}}||k||{^{ - M}} $ can be obtained by integration by parts for all $ 0 \ne k \in {\mathbb{Z}^d} $, that is, $ \tilde \Delta \left( x \right): = {x^M} $ and $ f \in {\mathcal{B}_{\tilde \Delta }} $. Obviously, if the  rotation vector $ \rho $ is Diophantine, i.e., $ \Delta \left( x \right): = {x^\tau } $ with $ \tau>d-1 $, then (\textbf{H1}) can be satisfied as long as $ M > d + m\tau  $, because
	\[\int_1^{ + \infty } {\frac{{{r^{d - 1}}{\Delta ^m}\left( r \right)}}{{\tilde \Delta \left( r \right)}}dr}  = \int_1^{ + \infty } {\frac{1}{{{r^{M + 1 - d - m\tau }}}}dr}  <  + \infty .\]
	This is the case given in \cite{MR3755876}.
	
	(ii) $ f $ is Gevrey smooth in some neighbourhood of $ \mathbb{T}^d $ in $ \mathbb{C}^d $, i.e., there exist $ c_f,\mu  > 0 $ and $ \nu  \in \left( {0,1} \right] $ such that $ | {{{\hat f}_k}} | \leqslant c_f{e^{ - \mu ||k||{^\nu }}} $ for all $ 0 \ne k \in {\mathbb{Z}^d} $. In particular, $ f $ is analytic when $ \nu =1 $. This leads to $ \tilde \Delta \left( x \right): = {e^{\mu {x^\nu }}} $. Therefore, if the  rotation vector $ \rho $ satisfies the nonresonant conditions \eqref{nonresonant}, \eqref{nonresonant2} with $ \Delta \left( x \right) = {e^{\tilde \mu {x^\nu }}},0 < \tilde \mu  < {m^{ - 1}}\mu  $ (weaker than the Diophantine type), then  one can verify (\textbf{H1}) as:
	\begin{align*}
	\int_1^{ + \infty } {\frac{{{r^{d - 1}}{\Delta ^m}\left( r \right)}}{{\tilde \Delta \left( r \right)}}dr}  &\leqslant \int_1^{ + \infty } {\frac{{{r^{d - 1}}{e^{m\tilde \mu {r^\nu }}}}}{{{e^{\mu {r^\nu }}}}}dr} \\
	& \leqslant {c_{d,\mu ,m,\tilde \mu ,\nu }}\int_1^{ + \infty } {\frac{1}{{{e^{\left( {\mu  - m\tilde \mu } \right){r^\nu }/2}}}}dr}  \\
	&<  + \infty .
	\end{align*}

	For two cases given above, the uniform convergence of weighted Birkhoff average can be obtained by applying Theorem \ref{T1}, and the convergence rate is polynomial.
	
	\item[(3)] It should be pointed out that we generalize the Diophantine condition for the irrational rotation vector $ \rho $  since the rapid convergence of the Fourier coefficients of $ f $ could overcome the nonresonance of $ \rho $, as shown in (ii). For example, for a given approximation function $  \Delta \left( x \right) $ in \eqref{nonresonant} and \eqref{nonresonant2}, we can require that  the Fourier coefficients of $ f $ to converge rapidly to
	\[\tilde \Delta \left( x \right) \sim {\Delta ^m}\left( x \right){x^d}(\log x)(\log \log x) \cdots {(\underbrace {\log  \cdots \log }_{\ell  \in {\mathbb{N}^ + }}x)^{1 + \zeta }},\;\;x \to  + \infty \]
	with $ \ell  \in {\mathbb{N}^ + } $ and $ \zeta>0 $ in \eqref{Bdelta}. Then (\textbf{H1}) holds because
	\begin{align*}
	\int_1^{ + \infty } {\frac{{{r^{d - 1}}{\Delta ^m}\left( r \right)}}{{\tilde \Delta \left( r \right)}}dr}  &= \mathcal{O}\left( {\int_M^{ + \infty } {\frac{1}{{r(\log r)(\log \log r) \cdots {{(\underbrace {\log  \cdots \log }_{\ell  \in {\mathbb{N}^ + }}r)}^{1 + \zeta }}}}dr} } \right) \\
	&<  + \infty .
	\end{align*}
	
	\item[(4)] For the continuous case \eqref{T1-2}, we only require that the nonresonant condition \eqref{nonresonant}  holds for $ n=0 $, i.e., \eqref{nonresonant2}, since we just have to integrate by parts directly with respect to $ \int_0^T {w \left( {t/T} \right){e^{2\pi itk \cdot \rho }}dt}  $ in proof.

	\item[(5)] The dependence of the universal  constant $ C_1>0 $ on the spatial dimension $ d $ is actually caused by the integrability assumption (\textbf{H1}) which is somewhat easier to verify. As to the subsequent infinite dimensional cases (e.g., Theorem \ref{T3}), one  has to only require the boundedness for series (e.g., (\textbf{H2})) to eliminate the influence of dimension, which is not essential. Additionally, one observes that $ C_1 $ might tend to infinite (e.g., it can be verified that $ \mathop {\lim }\limits_{m \to  + \infty } {\left\| {{{\bar w}^{\left( m \right)}}} \right\|_{{L^1}\left( {0,1} \right)}} =  + \infty  $ with the weighting function $ \bar{w} $ in \eqref{barwde}), so if we want to achieve the exponential convergence, some special techniques are needed, as shown in Section \ref{sec3} and Subsection \ref{ProofofT3}.
\end{itemize}

\subsection{Infinite-dimensional case $ \mathbb{T}^\infty $}
However, when considering the weighted Birkhoff average \eqref{birkhoff} on the infinite-dimensional torus $ {\mathbb{T}^\infty }: = {\mathbb{T}^\mathbb{N}} $, some spatial structure has to be required. For convenience, we use the Diophantine condition for the irrational vectors $ \rho $  proposed by Bourgain and the corresponding metric, see \cite{MR2180074,MR4201442}. %But it should be pointed out that  the requirement of $ \rho $ can also be generalized, similar to Theorem \ref{T1}.

More precisely, our set of irrational vectors $ \rho $  is the infinite-dimensional cube $ {\left[ {1,2} \right]^\mathbb{N}} $ (equal to $ \mathbb{T}^\infty $), endowed  with the probability measure $ \mathbb{P} $ induced by the product measure of the infinite-dimensional cube $ {\left[ {1,2} \right]^\mathbb{N}} $. Next, for fixed $ 2 \leqslant \eta \in \mathbb{N}^+ $, we define the set of infinite integer vectors with finite support
\begin{equation}\label{Zwuqiong}
	\mathbb{Z}_ * ^\infty : = \left\{ {k \in {\mathbb{Z}^\mathbb{N}}:{{\left| k \right|}_\eta }: = \sum\limits_{j \in \mathbb{N}} {{{\left\langle j \right\rangle }^\eta }\left| {{k_j}} \right|}  <  + \infty ,\;\;\left\langle j \right\rangle : = \max \left\{ {1,\left| j \right|} \right\}} \right\}.
\end{equation}
At this point,  $ {k_j} \ne 0 $ only for finitely many indices $ j \in \mathbb{N} $. It can be seen later that the such a metric like $ {{{\left| k \right|}_\eta }} $ is necessary for the infinite-dimensional case since it determines the boundedness of the summation in proof. Besides, the infinite-dimensional analyticity also depends on the above framework, see Corollary \ref{C1}.

%we define the infinite dimensional Diophantine vectors.

%\begin{definition}[Infinite dimensional Diophantine condition]
%	An irrational vector $ \rho \in {\left[ {1,2} \right]^\mathbb{N}} $ is of  Diophantine type if there exist  $ \gamma >0 $ and $ \mu>1 $ such that
%	\begin{equation}\label{nonresonant}
	%		\left| {k \cdot \rho  - n} \right| \geqslant \frac{\alpha }{{\Delta \left( {||k||} \right)}},\;\;\forall 0 \ne k \in {\mathbb{Z}^d},\;\;\forall n \in \mathbb{Z},
	%	\end{equation}
%	where $\left\|k\right\|=|k_1|+\cdots+|k_d|$.
%\end{definition}

\begin{definition}[Infinite-dimensional nonresonant condition]\label{In}
	An irrational vector $ \rho \in \mathbb{T}^\infty $  is said to satisfy the Infinite-dimensional nonresonant condition if there exist  $ \gamma >0 $ and an approximation function $ \tt{d} $ such that
	\begin{itemize}

		\item[($ c $)] The discrete case
		\begin{equation}\label{Infinite-dimensional nonresonant condition}
			\left| {k \cdot \rho  - n} \right| > \frac{\gamma }{{{\tt{d}}( {{{\left| k \right|}_\eta }} )}},\;\;\forall 0 \ne k \in \mathbb{Z}_ * ^\infty ,\;\;\forall n \in \mathbb{Z};
		\end{equation}
		
		\item[($ d $)] The continuous case
		\begin{equation}\label{Infinite-dimensional nonresonant condition2}
			\left| {k \cdot \rho } \right| > \frac{\gamma }{{{\tt{d}}( {{{\left| k \right|}_\eta }} )}},\;\;\forall 0 \ne k \in \mathbb{Z}_ * ^\infty .
		\end{equation}
	\end{itemize}
	
\end{definition}
\begin{remark}\label{wuqiongweimance}
	In particular, if
	\begin{equation}\label{diowuqiong}
		{\tt{d}}( {{{\left| k \right|}_\eta }} ) = \prod\limits_{j \in \mathbb{N}} {\left( {1 + {{\left| {{k_j}} \right|}^\mu }{{\left\langle j \right\rangle }^\mu }} \right)} ,\;\;\forall 0 \ne k \in \mathbb{Z}_ * ^\infty
	\end{equation}
	in \eqref{Infinite-dimensional nonresonant condition} with some $ \mu >1 $, then we say that the irrational vector $ \rho$ satisfies the Infinite-dimensional Diophantine condition. Define the set
	\[{\tt{D}_{\gamma ,\mu }}: = \left\{ {\rho  \in {{\left[ {1,2} \right]}^\mathbb{N}}:\rho \;\text{satisfies the Infinite-dimensional Diophantine condition}} \right\}.\]
	Then there exists a positive constant $ C(\mu) $ such that $ \mathbb{P}\left( {{{\left[ {1,2} \right]}^\mathbb{N}}\backslash {\tt{D}_{\gamma ,\mu }}} \right) \leqslant C\left( \mu  \right)\gamma  $, as proved in \cite{MR4091501,MR2180074}.
\end{remark}

Under the above spatial structure, we introduce  Fourier expansions of functions $ f\in \mathcal{B} $ on the infinite-dimensional torus $ \mathbb{T}^\infty $ below, see \cite{MR4201442} for details:
	\begin{equation}\label{wuqiongfff}
	f = \sum\limits_{ k \in {\mathbb{Z}_*^\infty}} {{{\hat f}_k}{e^{2\pi ik \cdot \theta }}} ,\;\;{{\hat f}_k} = \int_{{\mathbb{T}^\infty}} {f( {\hat \theta } ){e^{ - 2\pi ik \cdot \hat \theta }}d\hat \theta }.
\end{equation}
Now we define the function space with rapid convergence
\begin{equation}\label{wuqiongkongjian}
	\mathcal{B}_{\tilde \Delta_\infty } : = \left\{ {f:{\mathbb{T}^\infty } \to \mathcal{B}:\text{$ f $ satisfies \eqref{wuqiongfff}, and $ \mathop {\sup }\limits_{0 \ne k \in \mathbb{Z}_ * ^\infty } {{\tilde \Delta }_\infty }\left( {{{\left| k \right|}_\eta }} \right)\| {{{\hat f}_k}} \|_{\mathcal{B}} <  + \infty $}  } \right\},
\end{equation}
for an approximation function $ {\tilde \Delta }_\infty $. In order to establish a weighted Birkhoff average theorem on $ \mathbb{T}^\infty $, we have to make an assumption like (\textbf{H1}):

(\textbf{H2})\ The approximation functions given in \eqref{Infinite-dimensional nonresonant condition}, \eqref{Infinite-dimensional nonresonant condition2} and \eqref{wuqiongkongjian} satisfy the following boundedness condition:
\[\sum\limits_{0 \ne k \in \mathbb{Z}_ * ^\infty } {\frac{{{{\tt d}^m}\left( {{{\left| k \right|}_\eta }} \right)}}{{{{\tilde \Delta }_\infty }\left( {{{\left| k \right|}_\eta }} \right)}}}  <  + \infty .\]

After the above preparation, we are in a position to give the following theorem.

\begin{theorem}\label{T3}
	Give $ f \in \mathcal{B}_{\tilde \Delta_\infty }  $, and $ \rho $ satisfies the  Infinite-dimensional nonresonant condition in Definition  \ref{In}.	Assume (\textbf{H2}). Then there hold
	\begin{equation}\label{333}
		{\left\| {{\mathrm{WB}_N}\left( f \right)\left( \theta  \right) - \int_{{\mathbb{T}^\infty }} {f( {\hat \theta } )d\hat \theta } } \right\|_{\mathcal{B}}} \leqslant \frac{{{C_2 }}}{{{N^m}}},\;\;N \geqslant 1,
	\end{equation}
	and
	\begin{equation}\label{T2-2}
		{\left\| {\frac{1}{T}\int_0^T {w \left( t/T \right)f\left( {\rho t + \theta } \right)dt}  - \int_{{\mathbb{T}^\infty }} {f( {\hat \theta } )d\hat \theta } } \right\|_\mathcal{B}} \leqslant \frac{{{C_2 }}}{{{T^m}}},\;\;T \geqslant 1,
	\end{equation}
	where the positive constant $ C_2 $ only depends on $ f,{\tt{d}} ,{\tilde \Delta }_\infty,\eta ,w,\gamma ,m$.
\end{theorem}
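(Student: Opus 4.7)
The plan is to parallel the scheme that would establish Theorem \ref{T1}, but to replace polar-coordinate integration over a Euclidean lattice by direct summation over the finite-support lattice $\mathbb{Z}_*^\infty$, with the weighted metric $|\cdot|_\eta$ and assumption (\textbf{H2}) taking over the role that $r^{d-1}$ and (\textbf{H1}) played in the finite-dimensional setting.

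First I would expand $f$ via \eqref{wuqiongfff}, notice that $\int_{\mathbb{T}^\infty}f\,d\hat\theta=\hat f_0$, and write
\[
\mathrm{WB}_N(f)(\theta)-\int_{\mathbb{T}^\infty}f(\hat\theta)\,d\hat\theta=\sum_{0\ne k\in\mathbb{Z}_*^\infty}\hat f_k\,e^{2\pi ik\cdot\theta}\,S_N(k),\qquad S_N(k):=\frac{1}{A_N}\sum_{n=0}^{N-1}w(n/N)\,e^{2\pi ink\cdot\rho}.
\]
The interchange of the sum and the finite average is legitimate because, by the definition of $\mathcal{B}_{\tilde\Delta_\infty}$, $\sum_k\|\hat f_k\|_\mathcal{B}\le M_f\sum_k\tilde\Delta_\infty(|k|_\eta)^{-1}$, which is dominated by the series in (\textbf{H2}) since ${\tt d}\ge1$. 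Here $M_f:=\sup_{0\ne k}\tilde\Delta_\infty(|k|_\eta)\|\hat f_k\|_\mathcal{B}$.

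Next I would estimate $S_N(k)$ by applying Abel summation by parts $m$ times, using that $w\in\mathcal{C}_0^m([0,1])$ so that every boundary contribution vanishes; a standard computation produces a bound of the form $|S_N(k)|\le C_w N^{-m}|1-e^{2\pi ik\cdot\rho}|^{-m}$. Choosing $n_*\in\mathbb{Z}$ closest to $k\cdot\rho$ and applying the Infinite-dimensional nonresonant condition \eqref{Infinite-dimensional nonresonant condition} yields $|1-e^{2\pi ik\cdot\rho}|\gtrsim|k\cdot\rho-n_*|\ge\gamma/{\tt d}(|k|_\eta)$. Combining the three ingredients gives
\[
\Bigl\|\mathrm{WB}_N(f)(\theta)-\int_{\mathbb{T}^\infty}f(\hat\theta)\,d\hat\theta\Bigr\|_\mathcal{B}\le\frac{C_{w,\gamma,m}M_f}{N^m}\sum_{0\ne k\in\mathbb{Z}_*^\infty}\frac{{\tt d}^m(|k|_\eta)}{\tilde\Delta_\infty(|k|_\eta)},
\]
and the scalar series on the right is finite by (\textbf{H2}), which proves \eqref{333}. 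For \eqref{T2-2} the same scheme applies with integration by parts on $\int_0^T w(t/T)e^{2\pi itk\cdot\rho}\,dt$; it produces a factor $(2\pi ik\cdot\rho)^{-m}$ instead, which is controlled by \eqref{Infinite-dimensional nonresonant condition2} alone, so the shift by $n\in\mathbb{Z}$ is no longer needed, in agreement with comment (4) after Theorem \ref{T1}.

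The main obstacle is not the per-frequency summation by parts, which is essentially the one-dimensional Das--Yorke argument, but rather the two genuinely infinite-dimensional features: justifying the Fourier manipulations on $\mathbb{T}^\infty$, where frequencies range only over the countable lattice $\mathbb{Z}_*^\infty$ and one has no Euclidean compactness or polar coordinates available, and absorbing what would otherwise be a divergent dimensional factor into a single scalar series. This is precisely the reason for using the weighted metric $|k|_\eta=\sum_j\langle j\rangle^\eta|k_j|$ with $\eta\ge 2$: it forces rapid growth of $|k|_\eta$ as more coordinates are excited, thereby controlling the counting function of $\mathbb{Z}_*^\infty$ and making the series in (\textbf{H2}) convergent for reasonable choices of ${\tt d}$ and $\tilde\Delta_\infty$, replacing the explicit $r^{d-1}\,dr$ volume weight that appears in (\textbf{H1}).
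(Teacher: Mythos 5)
Your proposal is correct in substance and structure, but the crucial per-frequency estimate is obtained by a genuinely different device than the one the paper uses. The paper first extends $\sum_{n=0}^{N-1}w(n/N)e^{2\pi ink\cdot\rho}$ to a bilateral sum over all $n\in\mathbb{Z}$ (possible because $w$ vanishes identically outside $[0,1]$), then invokes the Poisson summation formula to rewrite it as $N\sum_{n\in\mathbb{Z}}\int_0^1 w(y)e^{2\pi iN(k\cdot\rho-n)y}\,dy$, and integrates each term by parts $m$ times; the small divisors then enter as $\sum_n|k\cdot\rho-n|^{-m}\lesssim {\tt d}^m(|k|_\eta)$. You instead work directly on the finite sum and apply discrete (Abel) summation by parts $m$ times, producing the factor $|1-e^{2\pi ik\cdot\rho}|^{-m}\lesssim{\tt d}^m(|k|_\eta)/\gamma^m$. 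Both routes lead to the bound $|S_N(k)|\lesssim N^{-m}{\tt d}^m(|k|_\eta)$, and from there the use of $(\textbf{H2})$ and the absolute-convergence argument are identical.

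One small caveat on your version: the statement that ``every boundary contribution vanishes'' under Abel summation is not literally accurate. Unlike the Poisson route, where $w\equiv 0$ outside $(0,1)$ makes the boundary terms of integration by parts vanish exactly, the discrete boundary term at $n=N-1$ involves $w((N-1)/N)$, which is nonzero. It is, however, $O(N^{-(m+1)})$ by Taylor expansion (since $w^{(j)}(1)=0$ for $0\le j\le m$), hence absorbed in the final estimate; similar smallness holds for the boundary terms at each subsequent stage. You should either extend the sum to $n\in\mathbb{Z}$ before summing by parts (paralleling \eqref{shuangwuqiong}) so that boundary terms genuinely vanish, or explicitly bound these residual terms. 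With that amendment the argument is complete, and the tradeoff is that the paper's Poisson-summation route keeps the boundary bookkeeping entirely clean at the cost of a more structural lemma (Lemma \ref{po}), whereas your Abel route is more elementary but needs a supplementary remark on the endpoint behavior of $w$.
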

\begin{remark}
	For the selected spatial structure and (\textbf{H2}), we eliminate the dependence of the universal constant on the dimension of the domain. This is extremely surprising because it avoids the Curse of Dimensionality.
\end{remark}
%\begin{remark}\label{remark1.2}
%For the continuous case \eqref{T2-2}, we only require that the infinite-dimensional nonresonant condition \eqref{Infinite-dimensional nonresonant condition}  holds for $ n=0 $, similar to the comment (4) which we forego.
%\end{remark}

As an application of Theorem \ref{T3}, we give the following corollary based on the Infinite-dimensional Diophantine condition.

\begin{corollary}[Universality of arbitrary polynomial convergence via analyticity in the almost periodic case]\label{C1}
	Give $ f \in \mathcal{B}_{\tilde \Delta_\infty }  $. Assume that the irrational vector $ \rho $ satisfies the  Infinite-dimensional Diophantine condition in Definition  \ref{In} with  \eqref{diowuqiong}, and $ f $ is analytic in some neighbourhood of $ \mathbb{T}^\infty $ in $ \mathbb{C}^{\N} $.	 Then \eqref{333} and \eqref{T2-2} hold with arbitrary given $ 2 \leqslant m \in \N^+ $ and a positive constant $ C_3  $ that only depends on $ f,{\tilde \Delta }_\infty,\eta ,\mu ,w,\gamma ,m  $.
\end{corollary}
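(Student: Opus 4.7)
\medskip

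\noindent\textbf{Proof proposal.} The plan is to reduce Corollary \ref{C1} to a direct application of Theorem \ref{T3}: given the Infinite-dimensional Diophantine condition with the explicit $\mathtt{d}$ from \eqref{diowuqiong}, one only needs to verify hypothesis (\textbf{H2}) using the exponential decay of Fourier coefficients supplied by analyticity.

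\textbf{Step 1 (extract Fourier decay from analyticity).} Since $f$ is analytic in some complex neighbourhood of $\mathbb{T}^\infty$ compatible with the $|\cdot|_\eta$-weight used in Bourgain's framework (see \cite{MR2180074,MR4201442}), the standard Cauchy-type estimate gives
\[
\|\hat f_k\|_{\mathcal{B}}\;\leq\; c_f\,e^{-\sigma|k|_\eta},\qquad \forall\,0\neq k\in\mathbb{Z}_*^\infty,
\]
for some constants $c_f,\sigma>0$. In particular $f\in\mathcal{B}_{\tilde\Delta_\infty}$ with the effective weight $\tilde\Delta_\infty(x)=c_f^{-1}e^{\sigma x}$ (any slower exponential works equally well).

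\textbf{Step 2 (verify (\textbf{H2}) by factorization).} Using $|k|_\eta=\sum_{j\in\mathbb{N}}\langle j\rangle^\eta|k_j|$ and the multiplicative form of $\mathtt{d}$, the series in (\textbf{H2}) splits as a product over coordinates: writing $a_j(k_j):=(1+|k_j|^\mu\langle j\rangle^\mu)^m e^{-\sigma\langle j\rangle^\eta|k_j|}$, one has $a_j(0)=1$ and
\[
\sum_{0\neq k\in\mathbb{Z}_*^\infty}\frac{\mathtt{d}^m(|k|_\eta)}{\tilde\Delta_\infty(|k|_\eta)}\;=\;c_f\Biggl(\prod_{j\in\mathbb{N}}S_j-1\Biggr),\qquad S_j:=\sum_{k_j\in\mathbb{Z}}a_j(k_j),
\]
because the finite-support constraint on $\mathbb{Z}_*^\infty$ is exactly what makes this identity hold. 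For large $j$ a crude one-variable estimate gives $S_j\leq 1+C(\mu,m)\langle j\rangle^{m\mu}\exp\bigl(-\tfrac{\sigma}{2}\langle j\rangle^\eta\bigr)$, and since $\eta\geq 2$ the super-exponential factor dominates the polynomial $\langle j\rangle^{m\mu}$, so $\sum_j(S_j-1)<+\infty$ and the infinite product converges. This establishes (\textbf{H2}).

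\textbf{Step 3 (invoke Theorem \ref{T3}).} With (\textbf{H2}) verified for every $2\leq m\in\mathbb{N}^+$, Theorem \ref{T3} applied to $f$, $\rho$, $w$ and this $m$ yields the estimates \eqref{333} and \eqref{T2-2}; tracing the dependencies of the resulting constant through Steps 1--2, it depends only on $f,\tilde\Delta_\infty,\eta,\mu,w,\gamma,m$, which is the claimed $C_3$.

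\textbf{Main obstacle.} The only substantive issue is Step 2: one must (i) justify the factorization $\sum_{k\in\mathbb{Z}_*^\infty}\prod_j a_j(k_j)=\prod_j\sum_{k_j\in\mathbb{Z}}a_j(k_j)$, which is where the restriction to finitely-supported multi-indices is essential, and (ii) check that the hypothesis $\eta\geq 2$ (from \eqref{Zwuqiong}) together with the exponential analytic decay in $|k|_\eta$ is strong enough to beat both the coordinate-wise polynomial weight $(1+|k_j|^\mu\langle j\rangle^\mu)^m$ coming from $\mathtt{d}^m$ \emph{and} the product over all $j\in\mathbb{N}$ at once—this is precisely the mechanism that avoids the Curse of Dimensionality highlighted in the remark following Theorem \ref{T3}.
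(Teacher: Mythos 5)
Your proof is correct, but it takes a genuinely different route from the paper's. To verify (\textbf{H2}) the paper first replaces the $k$-dependent quantity $\mathtt{d}(|k|_\eta)=\prod_j(1+|k_j|^\mu\langle j\rangle^\mu)$ by a bound depending only on $|k|_\eta$, namely $\mathtt{d}(|k|_\eta)\leq C(\eta,\mu,\rho_*)e^{\rho_*|k|_\eta}$ with $\rho_*=1/(2m)$ (Lemma \ref{5.2}); it then groups the sum by the level sets $\nu=|k|_\eta\in\mathbb{N}^+$ and estimates the number of lattice points on each level set via a binomial/Stirling computation, $\#\{k:|k|_\eta=\nu\}\leq C_\eta\,\nu^{\nu^{1/\eta}}$, before summing over $\nu$ using $\tilde\Delta_\infty(\nu)=e^\nu$ and $\eta\geq 2$. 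You instead keep the exact multiplicative structure of $\mathtt{d}$, factor the weight $|k|_\eta=\sum_j\langle j\rangle^\eta|k_j|$ coordinate by coordinate, and write the whole sum as the telescoping infinite product $\prod_j S_j-1$ over $\mathbb{Z}_*^\infty$; convergence of the product then follows from a one-variable estimate $S_j=1+O(\langle j\rangle^{m\mu}e^{-\sigma\langle j\rangle^\eta/2})$ and summability in $j$.

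Both arguments are sound. Your tensorization is shorter and makes the role of the finite-support restriction and of $\eta\geq 2$ (more precisely, super-polynomial decay in $\langle j\rangle$) completely transparent, and it would also extend painlessly to exponents $\eta>1$. On the other hand, it leans on the specific product form of \eqref{diowuqiong}: if $\mathtt{d}$ were an arbitrary approximation function of $|k|_\eta$ (as Theorem \ref{T3} allows) your factorization would have nothing to factor, whereas the paper's counting lemma $\#\{k:|k|_\eta=\nu\}\leq C_\eta\,\nu^{\nu^{1/\eta}}$ is a general-purpose tool that the paper indeed reuses verbatim in the proof of Corollary \ref{wuqiongweidio}. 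One small point worth making explicit in your Step 2: the identity $\sum_{k\in\mathbb{Z}_*^\infty}\prod_j a_j(k_j)=\prod_j\sum_{k_j}a_j(k_j)$ (equality in $[1,+\infty]$) is obtained by truncating to indices $j\leq J$ and letting $J\to\infty$ by monotone convergence on both sides, using $a_j\geq 0$, $a_j(0)=1$, and the finite-support constraint; you flagged this correctly as the essential point to justify.
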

\begin{remark}\label{remacoro2.11}
	Analyticity  in the almost periodic case implies that the Fourier coefficients of $ f $ converge at an exponential rate under the  spatial structure (similar to (ii) in Comment (2)), i.e., the approximation function in \eqref{wuqiongkongjian} satisfies $ {{\tilde \Delta }_\infty }\left( x \right) = {\exp(x)} $ without loss of generality, see also \cite{MR4201442}. %In fact,  we can even show that it has exponential  convergence in some special cases, see Corollary \ref{wuqiongweidio}.
%	It can be seen from the above corollary  that in the infinite-dimensional case, we have to require the  faster convergence  of the Fourier coefficients of $ f $  to ensure that the weighted Birkhoff average admits  polynomial convergence rate. 
\end{remark}
\begin{remark}
Corollary \ref{C1} shows that arbitrary polynomial convergence is indeed universal via anaylticity in the almost periodic case, as long as the weighting function considered is $ \mathcal{C}_0^\infty\left( {\left[ {0,1} \right]} \right) $, since the Diophantine rotations form a set of full Lebesgue measure, see Remark \ref{wuqiongweimance}.
\end{remark}

\section{Convergence of exponential rate type} \label{sec3}
As mentioned in \eqref{renyi2} and \eqref{renyi}, the classical Birkhoff average might converge at an arbitrarily slow rate. Surprisingly, if we choose a weighting function good enough and require that the Fourier coefficients of $ f $ to converge more rapidly, then the corresponding weighted Birkhoff average could converge at an exponential rate. We also provide an intuitive explanation of why the exponential rate could be indeed achieved, see the proof of Theorem \ref{youxianweirenyisulv} in Subsection \ref{ProofofT3}.

Here we choose the new weighting function as 
\begin{equation}\label{barwde}
	\bar w\left( x \right): = {\left( {\int_0^1 {\exp \left( { - {s^{ - 1}}{{\left( {1 - s} \right)}^{ - 1}}} \right)ds} } \right)^{ - 1}} \cdot \exp \left( { - {x^{ - 1}}{{\left( {1 - x} \right)}^{ - 1}}} \right)
\end{equation}
on $ \left( {0,1} \right) $, i.e., $ p=q=1 $ in \eqref{zhishupingjun}, and let $ \bar{w}(0)=\bar{w}(1)=0 $. Denote by $ {\mathrm{\overline{WB}}_N}\left( f \right)\left( \theta  \right) $ the corresponding weighted Birkhoff average at this point. According to Lemma \ref{Lemma5.3}, we have the following $ L^1 $ norm estimates for the higher derivatives of $ \bar{w} $:
\begin{equation}\label{wn}
	\int_0^1 {\left| {{{\bar w}^{\left( n \right)}}\left( x \right)} \right|dx}  \leqslant  C_* { n ^{\beta n}},\;\;n \geqslant 2,
\end{equation}
provided with $ {C_ * } > 0 $ that only depends on $ \bar{w} $, and $ \beta >1 $ is an absolute constant. As we will see later, \eqref{wn} and the truncation technique will play an important role in dealing with exponential convergence for quasiperiodic and almost periodic cases. In fact, the resulting convergence rate will be faster if one can improve the upper bound in \eqref{wn} or find a better weighting function. However, we suspect that \textit{the hyperexponential convergence rate (e.g., $ \exp ( { - \exp({N})} ) $) cannot be achieved through this approach}, because higher derivatives in \eqref{wn} generally have coefficients such as $  n! \sim \sqrt {2\pi n} {\left( {n/e} \right)^n} $, and the former seems to  require that $ {\left\| {{{\bar w}^{\left( n \right)}}} \right\|_{{L^1}\left( {0,1} \right)}} = \mathcal{O}\left( {{{\left( {\log n} \right)}^n}} \right) $.

\begin{definition}[Adaptive function]
	A function $ \varphi(x) $ defined on $ \left[ {1, + \infty } \right) $ is called an adaptive function, if it is nondecreasing, satisfies that $ \varphi(+\infty)=+\infty $ and  $ \varphi \left( x \right) = o\left( x \right) $ as  $ x \to +\infty $.
\end{definition}
\begin{remark}
	For example, $ \varphi_1(x)=\log^{u}(1+x) $ with $ u>0 $ and $ {\varphi _2}\left( x \right) = {x^v} $ with $ 0<v<1 $ are all adaptive functions. The selection of an  adaptive function is important for the analysis of convergence rate below.
\end{remark}
We are now in a position to establish the exponential convergence theorems  through a given adaptive function $ \varphi $ and under certain assumptions.
\subsection{Finite-dimensional case $ \mathbb{T}^d $}
We make the following assumption:

(\textbf{H3})\ Let an adaptive function $ \varphi $ be given. The approximation functions given in \eqref{nonresonant}, \eqref{nonresonant2} and \eqref{Bdelta} satisfy the smallness condition with some $ c>0 $:
\[\int_{{\Delta ^{ - 1}}\left( {2\pi \alpha x/\varphi \left( x \right)} \right)}^{ + \infty } {\frac{{{r^{d - 1}}\Delta^2 \left( r \right)}}{{\tilde \Delta \left( r \right)}}dr}  = \mathcal{O}\left( {{e^{ - cx}}} \right),\;\;x \to  + \infty .\]

\begin{theorem}\label{youxianweirenyisulv}
	Give an adaptive function $ \varphi $, let $ f \in \mathcal{B}_{\tilde \Delta }  $,  and $ \rho $ satisfy the Finite-dimensional nonresonant condition in Definition \ref{Fi}. Assume (\textbf{H3}). Then there exist an absolute constant $ \beta_*>0 $, and a positive constant $ C_4 $ that only depends on $ {f,\alpha ,d,\Delta,\tilde{\Delta},\varphi,c} $ such that the following hold with $ N,T $ sufficiently large
	\begin{equation}\label{333-1}
		{\left\| { {\mathrm{\overline{WB}}_N}\left( f \right)\left( \theta  \right) - \int_{{\mathbb{T}^d}} {f( {\hat \theta } )d\hat \theta } } \right\|_\mathcal{B}} \leqslant {C_{4}}\exp\left(-(\varphi(N))^{\beta_*}\right),
	\end{equation}
	and
	\begin{equation}\label{333-2}
		{\left\| {\frac{1}{T}\int_0^T {\bar w\left( {t/T} \right)f\left( {\rho t + \theta } \right)dt}  - \int_{{\mathbb{T}^d}} {f( {\hat \theta } )d\hat \theta } } \right\|_\mathcal{B}} \leqslant {C_4}\exp\left(-(\varphi(T))^{\beta_*}\right).
	\end{equation}
\end{theorem}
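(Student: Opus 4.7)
The plan is to expand $f$ via \eqref{ff}, interchange the (absolutely convergent) summations, and reduce the left-hand sides of \eqref{333-1}--\eqref{333-2} to per-mode bounds on
\[
S_N(k) := A_N^{-1}\sum_{n=0}^{N-1} \bar w(n/N)e^{2\pi i nk\cdot\rho},\qquad I_T(k) := T^{-1}\!\int_0^T \bar w(t/T)e^{2\pi i tk\cdot\rho}\,dt.
\]
Because $\bar w^{(i)}(0)=\bar w^{(i)}(1)=0$ for every $i\geq 0$, I would integrate by parts---or, in the discrete case, perform Abel summation, using $A_N\asymp N$ since $\bar w>0$ on $(0,1)$---$j$ times with no boundary contribution. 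Invoking the nonresonance lower bound on $|k\cdot\rho|$ (resp.\ $|k\cdot\rho-n|$) together with the derivative estimate \eqref{wn}, this yields the uniform-in-$j$ inequality
\[
\max\{|S_N(k)|,\,|I_T(k)|\} \leq C_*\left(\frac{j^{\beta}\,\Delta(\|k\|)}{2\pi\alpha\,T}\right)^{\!j},\qquad j\geq 1.
\]
The continuous statement uses only \eqref{nonresonant2}, whereas the discrete Abel summation introduces $(e^{2\pi i k\cdot\rho}-1)^{-1}$ factors and therefore requires the stronger \eqref{nonresonant}.

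Next, I would install a Fourier-side truncation at $K:=\Delta^{-1}(2\pi\alpha T/\varphi(T))$ and an adaptive, $\|k\|$-dependent choice of $j$. For $0\neq\|k\|\leq K$ I set $x(k):=\Delta(\|k\|)/(2\pi\alpha T)\leq \varphi(T)^{-1}$ and minimize $(j^{\beta}x(k))^j$ in $j$; the continuous optimum $j(k)\sim e^{-1}x(k)^{-1/\beta}$ gives
\[
|I_T(k)| \leq C_*\exp\!\left(-\tfrac{\beta}{e}\,x(k)^{-1/\beta}\right) \leq C_*\exp\!\left(-\tfrac{\beta}{e}\,\varphi(T)^{1/\beta}\right)
\]
uniformly in $\|k\|\leq K$. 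Summing this against $\|\hat f_k\|_\mathcal B\leq C_f/\tilde\Delta(\|k\|)$ (with $\sum_{k\neq 0} 1/\tilde\Delta(\|k\|)<\infty$ by (\textbf{H3}) since $\Delta\geq 1$) yields a low-frequency contribution bounded by $C\exp(-c\,\varphi(T)^{1/\beta})$. For the tail $\|k\|>K$ I would take the cruder $j=2$, giving $|I_T(k)|\leq C\Delta^2(\|k\|)/T^2$, and pass from the lattice sum to an integral:
\[
\sum_{\|k\|>K}\|\hat f_k\|_\mathcal B\,|I_T(k)| \leq \frac{C}{T^2}\int_K^\infty \frac{r^{d-1}\Delta^2(r)}{\tilde\Delta(r)}\,dr \leq \frac{C}{T^2}\,e^{-cT},
\]
which is exactly the content of (\textbf{H3}) at $x=T$. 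Since $\varphi(T)=o(T)$, this tail is dominated by the low-frequency term, and combining the two yields \eqref{333-1}--\eqref{333-2} with $\beta_* = 1/\beta$, an absolute constant inherited from \eqref{wn}.

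The hard part will be the $\|k\|$-adaptive choice of $j(k)$ for the low modes: the discretized optimizer $j=\lfloor e^{-1}x(k)^{-1/\beta}\rfloor$ must be at least $1$ uniformly for $0\neq\|k\|\leq K$, which forces $\varphi(T)$ to exceed an absolute threshold (hence ``$N,T$ sufficiently large'' in the statement), and the exponent $\tfrac{\beta}{e}\varphi(T)^{1/\beta}$ must survive this rounding. A secondary nuisance lies in the discrete case: iterated Abel summation forces one to compare the $j$-th forward differences of $\bar w(n/N)$ to $N^{-j}\|\bar w^{(j)}\|_{L^1(0,1)}$ via repeated mean-value arguments, and these estimates must interact cleanly with the superexponential growth $C_*j^{\beta j}$ in \eqref{wn}, since any slackness would be amplified by a $j$-th power and destroy the exponential gain.
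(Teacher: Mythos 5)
Your proposal matches the paper's proof in all essentials: you split the Fourier sum at the same cutoff $K=\Delta^{-1}(2\pi\alpha N/\varphi(N))$, you use the same $\|k\|$-adaptive choice $j(k)\sim e^{-1}\bigl(\Delta(\|k\|)/(2\pi\alpha N)\bigr)^{-1/\beta}$ of the number of integrations by parts together with the derivative bound \eqref{wn} for the low modes, and you revert to the fixed-order ($m=2$) estimate of Theorem \ref{T1} plus hypothesis (\textbf{H3}) for the tail. The one structural deviation is the discrete case: the paper passes through the Poisson summation formula (Lemma \ref{po}), converting $\sum_{n=0}^{N-1}\bar w(n/N)e^{2\pi i nk\cdot\rho}$ into a bilateral sum of integrals $\int_0^1\bar w(y)e^{2\pi i N(k\cdot\rho-n)y}\,dy$, so that the $j$-fold integration by parts and the summation-over-$n$ argument from \eqref{Nm}--\eqref{deltam} apply verbatim; you instead propose iterated Abel summation, which works in principle (the relevant small divisor $|e^{2\pi ik\cdot\rho}-1|$ is comparable to $\mathrm{dist}(k\cdot\rho,\mathbb Z)$) but, as you note yourself, demands a separate lemma relating $j$-th forward differences of $\bar w(\cdot/N)$ to $N^{-j}\|\bar w^{(j)}\|_{L^1}$ with constants controlled uniformly in $j$, since any slack gets raised to the $j$-th power. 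The Poisson route sidesteps this entirely and is the cleaner choice. Your exponent $\beta_*=1/\beta$ versus the paper's $\beta_*=(2\beta)^{-1}$ is an immaterial factor-of-two slack from the floor-rounding of $j(k)$ and absorption of the constant $\beta/e$, consistent with the theorem only asking for \emph{some} absolute $\beta_*>0$.
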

\begin{remark}\label{remark3.1}
	It can be obviously seen from the L'Hospital's rule that if the divergence rate of $ \tilde{\Delta}(x) $ is rapid enough, then the convergence of the weighted Birkhoff average can indeed be of exponential rate type (because $ \Delta(x) $ is fixed at this point).  In fact, the smallness of (\textbf{H3}) can be further  weakened, we do not pursue that.
\end{remark}

Based on Theorem \ref{youxianweirenyisulv}, we give the following corollary to the case where $ f $ is analytic and $ \rho $ is Diophantine. The Gevrey smooth situation is in fact similar, which we omit here.

\begin{corollary}[Universality of exponential convergence via analyticity in the quasiperiodic case]\label{jiexi+dio}
	Assume that the irrational vector $ \rho $ satisfies the  Finite-dimensional Diophantine condition \eqref{Finite-dimensional Diophantine condition}, and $ f $ is analytic in some neighbourhood of $ \mathbb{T}^d $ in $ \mathbb{C}^d $. Then Theorem \ref{youxianweirenyisulv} holds with $ N,T $ sufficiently large and a universal constant $ C_5>0 $ independent of them, and the convergence rate is indeed exponential, i.e.,  $ \mathcal{O}(\exp ( { - \tilde c{N^{{\xi}}}} )) $ and $ \mathcal{O}(\exp ( { - \tilde c{T^{{\xi}}}} )) $ with some $ \tilde{c}>0 , \xi  = {\beta _ * }{\left( {1 + \tau {\beta _ * }} \right)^{ - 1}} > 0 $.
\end{corollary}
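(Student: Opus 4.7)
The plan is to apply Theorem \ref{youxianweirenyisulv} with an adaptive function $\varphi$ chosen so that $\varphi(N)^{\beta_*}=N^\xi$, after extracting the two approximation functions from the hypotheses and checking the smallness condition (\textbf{H3}) in the analytic--Diophantine regime.

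\smallskip

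\noindent\emph{Data extraction.} Analyticity of $f$ in a complex neighborhood of $\mathbb{T}^d$ of width $\mu/(2\pi)$ yields, via Cauchy's estimate in each of the $d$ variables,
\[\|\hat{f}_k\|_{\mathcal{B}} \leq c_f\,e^{-\mu\|k\|},\qquad k \in \mathbb{Z}^d,\]
so $f \in \mathcal{B}_{\tilde{\Delta}}$ with $\tilde{\Delta}(x) := e^{\mu x}$ (cf.\ Comment (2)(ii) of Section \ref{sec2}), while the Diophantine hypothesis \eqref{Finite-dimensional Diophantine condition} gives $\Delta(x) = x^\tau$ with $\tau > d-1$. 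I would then choose
\[\varphi(x) := x^v,\qquad v := \frac{1}{1+\tau\beta_*} \in (0,1),\]
an adaptive function for which $v\beta_* = \xi$ and $(1-v)/\tau = \xi$; in particular $\varphi(N)^{\beta_*} = N^{\xi}$.

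\smallskip

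\noindent\emph{Verification of (\textbf{H3}).} With these choices the lower limit of the (\textbf{H3}) integral becomes $L(x) = (2\pi\alpha)^{1/\tau}\,x^{\xi}$, while the integrand is $r^{d-1+2\tau}e^{-\mu r}$. A standard incomplete-gamma tail estimate gives
\[\int_{L(x)}^{+\infty} r^{d-1+2\tau}\,e^{-\mu r}\,dr \leq C_{d,\tau,\mu}\,L(x)^{d-1+2\tau}\,e^{-\mu L(x)} = \mathcal{O}\!\left(\exp(-c_0\,x^\xi)\right),\]
for some $c_0>0$ absorbing the polynomial prefactor. This is the tail bound that actually enters the proof of Theorem \ref{youxianweirenyisulv} at the relevant scale $x \asymp \varphi(N)^{\beta_*}$, i.e.\ the weakening of (\textbf{H3}) alluded to in Remark \ref{remark3.1}. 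Theorem \ref{youxianweirenyisulv} then delivers
\[\left\|{\mathrm{\overline{WB}}_N}(f)(\theta) - \int_{\mathbb{T}^d} f(\hat\theta)\,d\hat\theta\right\|_{\mathcal{B}} \leq C_5\exp\!\left(-\varphi(N)^{\beta_*}\right) = C_5\exp(-\tilde c\,N^\xi),\]
and analogously $C_5\exp(-\tilde c\,T^\xi)$ for the continuous average.

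\smallskip

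\noindent\emph{Main obstacle.} The principal subtlety is this verification: with $\tau > d-1 \geq 1$ for $d \geq 2$, the map $x \mapsto \Delta^{-1}(x/\varphi(x))$ cannot grow faster than $x^{1/\tau}$, which is strictly sublinear, so the literal form $\mathcal{O}(e^{-cx})$ of (\textbf{H3}) is incompatible with an adaptive $\varphi$. One is therefore compelled to use the weaker form $\mathcal{O}(\exp(-c\,\varphi(x)^{\beta_*}))$ established above --- precisely the weakening mentioned in Remark \ref{remark3.1}, which is what the truncated-Fourier argument in the proof of Theorem \ref{youxianweirenyisulv} in fact uses. All remaining ingredients (Cauchy decay, incomplete-gamma tail, and the algebraic identity $\varphi(N)^{\beta_*} = N^\xi$) are routine.
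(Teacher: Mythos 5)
Your proof is correct and follows essentially the same route as the paper's: same choice of adaptive function $\varphi(x)=x^{v}$ with $v=(1+\tau\beta_*)^{-1}$, the same tail estimate of the integral over $\Lambda_2$ giving $\mathcal{O}(\exp(-\tilde c\,x^{\xi}))$, and the same observation (the paper invokes Remark \ref{remark3.1}) that the literal form of (\textbf{H3}) fails and must be replaced by the sub-exponential bound balanced against $\mathtt{S}_1$. Your incomplete-gamma bound carries the polynomial prefactor a bit more carefully than the paper, but this is cosmetic; the argument is the same.
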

\begin{remark}
This corollary shows that exponential convergence is indeed universal in the quasiperiodic case via analyticity, since the Diophantine rotations form a set of full Lebesgue measure.
\end{remark}

\subsection{Infinite-dimensional case $ \mathbb{T}^\infty $}
We make the following assumption:

(\textbf{H4})\ Let an adaptive function $ \varphi $ be given. The approximation functions given in  \eqref{Infinite-dimensional nonresonant condition},  \eqref{Infinite-dimensional nonresonant condition2} and \eqref{wuqiongkongjian} satisfy the following smallness condition with some $ c>0 $:
\[\sum\limits_{{{\left| k \right|}_\eta } > {\tt{d} ^{ - 1}}\left( {2\pi \gamma x/\varphi \left( x \right)} \right)} {\frac{{{{\tt{d}}}^2 \left( {{{\left| k \right|}_\eta }} \right)}}{{{{\tilde \Delta }_\infty }\left( {{{\left| k \right|}_\eta }} \right)}}}  = \mathcal{O}\left( {{e^{ - cx}}} \right),\;\;x \to  + \infty .\]

\begin{theorem}\label{wuqiongweirenyisulv}
	Give an adaptive function $ \varphi $, let $ f \in \mathcal{B}_{\tilde \Delta_\infty }  $, and $ \rho $ satisfy the  Infinite-dimensional nonresonant condition in Definition \ref{In}.	Assume (\textbf{H4}). Then there exist an absolute constant $ \beta_*>0 $, and a positive constant $ C_6 $ that only depends on $ f,{\tt{d}} ,{\tilde \Delta }_\infty,\eta ,\gamma, \varphi,c $ such that the following hold with $ N,T $ sufficiently large
	\begin{equation}\notag
		{\left\| {{\mathrm{\overline{WB}}_N}\left( f \right)\left( \theta  \right) - \int_{{\mathbb{T}^\infty }} {f( {\hat \theta } )d\hat \theta } } \right\|_\mathcal{B}} \leqslant {C_6}\exp\left(-(\varphi(N))^{\beta_*}\right),
	\end{equation}
	and
	\begin{equation}\notag
		{\left\| {\frac{1}{T}\int_0^T {\bar w\left( {t/T} \right)f\left( {\rho t + \theta } \right)dt}  - \int_{{\mathbb{T}^\infty }} {f( {\hat \theta } )d\hat \theta } } \right\|_\mathcal{B}} \leqslant {C_6}\exp\left(-(\varphi(T))^{\beta_*}\right).
	\end{equation}
\end{theorem}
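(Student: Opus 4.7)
The plan is to mirror the finite-dimensional argument of Theorem~\ref{youxianweirenyisulv}, replacing the Fourier expansion on $\mathbb{T}^d$ by the infinite-dimensional one \eqref{wuqiongfff}, the norm $\|\cdot\|$ by the metric $|\cdot|_\eta$ of \eqref{Zwuqiong}, and the nonresonant bound \eqref{nonresonant} by \eqref{Infinite-dimensional nonresonant condition}. First I would write
\[
\mathrm{\overline{WB}}_N(f)(\theta) - \int_{\mathbb{T}^\infty} f(\hat\theta)\, d\hat\theta = \sum_{0 \ne k \in \mathbb{Z}_*^\infty} \hat{f}_k\, e^{2\pi i k\cdot \theta}\, S_N(k), \quad S_N(k) := \frac{1}{A_N}\sum_{n=0}^{N-1}\bar{w}(n/N)\,e^{2\pi i n k\cdot \rho},
\]
where termwise convergence in $\mathcal{B}$ follows from $f \in \mathcal{B}_{\tilde{\Delta}_\infty}$ together with the decay built into (\textbf{H4}). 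Summation by parts applied $j$ times, exploiting the vanishing of all derivatives of $\bar{w}$ at $0,1$ (so the boundary terms disappear at every step) and $A_N \sim N$, yields for every integer $j\geq 2$
\[
|S_N(k)| \leq \frac{C\,\|\bar{w}^{(j)}\|_{L^1(0,1)}}{N^{j}\,|e^{2\pi i k\cdot\rho}-1|^{j}} \leq \frac{C\,\|\bar{w}^{(j)}\|_{L^1(0,1)}\,{\tt d}^{\,j}(|k|_\eta)}{(2\pi\gamma N)^{j}},
\]
where the second inequality uses Definition~\ref{In} together with $|e^{2\pi i\theta}-1|\geq 4\,|\theta-n|$ for the integer $n$ nearest to $\theta$, and \eqref{wn} further delivers $\|\bar{w}^{(j)}\|_{L^1(0,1)}\leq C_*\, j^{\beta j}$.

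Next I would split the Fourier sum at the truncation level $K(N) := {\tt d}^{-1}\bigl(2\pi\gamma N/\varphi(N)\bigr)$. For the tail $|k|_\eta > K(N)$ I apply the above with the fixed order $j=2$, so that $|S_N(k)|\leq C\,{\tt d}^{2}(|k|_\eta)/N^{2}$; combined with $\|\hat{f}_k\|_\mathcal{B} \leq \sup_{k'}(\tilde{\Delta}_\infty(|k'|_\eta)\|\hat{f}_{k'}\|_\mathcal{B})/\tilde{\Delta}_\infty(|k|_\eta)$ and (\textbf{H4}) applied with $x=N$, this yields
\[
\sum_{|k|_\eta > K(N)}\|\hat{f}_k\|_\mathcal{B}\, |S_N(k)| = \mathcal{O}(e^{-cN}).
\]
For the head $|k|_\eta \leq K(N)$ I instead choose $j := \lfloor (\varphi(N)/2)^{1/\beta}\rfloor$; since ${\tt d}(K(N)) = 2\pi\gamma N/\varphi(N)$, the bound on $|S_N(k)|$ collapses to $C_*(j^\beta/\varphi(N))^j \leq C_*\, 2^{-j} \leq C_*\exp\bigl(-c_1\varphi(N)^{1/\beta}\bigr)$ uniformly in such $k$. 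Multiplication by the $N$-independent total mass $\sum_{0\ne k}\|\hat f_k\|_\mathcal{B}$, which is finite because $(\textbf{H4})$ forces $\sum 1/\tilde\Delta_\infty(|k|_\eta)<+\infty$, preserves the exponential factor. Setting $\beta_*:= 1/\beta\in(0,1)$ and observing that $e^{-cN}$ is dominated by $\exp(-\varphi(N)^{\beta_*})$ for $N$ large (since $\varphi(N)=o(N)$) combines the two contributions into the claimed discrete estimate. The continuous case is identical after each summation by parts is replaced by an integration by parts against $e^{2\pi i t k\cdot\rho}$, permissible because $\bar{w}$ and all its derivatives vanish at the endpoints; here \eqref{Infinite-dimensional nonresonant condition2} takes the role of \eqref{Infinite-dimensional nonresonant condition} and $N$ is replaced by $T$.

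The main obstacle will be the simultaneous tuning of the two scales $K(N)$ and $j(N)$: $K(N)$ must be selected to extract an $e^{-cN}$ tail from (\textbf{H4}), while $j(N)$ must obey $j^\beta\lesssim\varphi(N)$ to overcome the small divisors ${\tt d}^{\,j}(|k|_\eta)$ generated in the head, and the resulting exponent $1/\beta$ is precisely what limits $\beta_*$. Two further infinite-dimensional subtleties must be checked: the weight $\langle j\rangle^\eta$ in \eqref{Zwuqiong} is essential for absolute convergence of the sum over $\mathbb{Z}_*^\infty$ after truncation (and is exactly the mechanism that keeps $C_6$ independent of any spatial dimension, echoing Remark~\ref{wuqiongweimance} and the Curse-of-Dimensionality comment following Theorem~\ref{T3}); and the termwise manipulation of the Fourier series on $\mathbb{T}^\infty$ must be justified using only (\textbf{H4}), which I expect to do by first working with a finite-support approximation of $f$ and passing to the limit via the uniform bound established above.
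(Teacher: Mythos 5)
Your overall strategy---truncating the Fourier sum at $K(N)={\tt d}^{-1}(2\pi\gamma N/\varphi(N))$, using a large (roughly $\varphi(N)^{1/\beta}$) number of difference/integration-by-parts steps on the head, a fixed number on the tail, and invoking (\textbf{H4}) to make the tail exponentially small---is exactly the skeleton of the paper's proof, which simply remarks that the argument of Theorem~\ref{youxianweirenyisulv} carries over verbatim once \eqref{jibiji} is replaced by the analogous sum over $\mathbb{Z}_*^\infty$. Your choice of a $k$-independent $j=\lfloor(\varphi(N)/2)^{1/\beta}\rfloor$ on the head instead of the paper's $k$-dependent $L_1(k,N)$ is a small, legitimate simplification, and your observation that the $|\cdot|_\eta$-balls in $\mathbb{Z}_*^\infty$ are finite (so the total mass $\sum_{k\ne 0}\|\hat f_k\|_\mathcal{B}$ is finite under (\textbf{H4})) is correct.

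The genuine gap lies in the discrete bound on $S_N(k)$. You derive it by applying Abel summation by parts $j$ times and assert that ``the boundary terms disappear at every step'' because $\bar w$ vanishes to infinite order at the endpoints. That is false for the finite sum $\sum_{n=0}^{N-1}\bar w(n/N)e^{2\pi ink\cdot\rho}$: after one Abel step the boundary term contains $\bar w((N-1)/N)$, which is exponentially small in $N$ but not zero, and after $j$ steps one accumulates $j$ such terms involving iterated differences of $\bar w$ near $0$ and $1$, none of which vanish identically; they must be estimated and summed, and $j$ itself grows like $\varphi(N)^{1/\beta}$. Moreover, $j$-fold Abel summation produces iterated finite differences $\Delta^j\bar w(n/N)$, not the derivative $\bar w^{(j)}$, so the claimed factor $\|\bar w^{(j)}\|_{L^1(0,1)}$ needs either an $L^\infty$ bound (which does follow from the proof of Lemma~\ref{Lemma5.3}, e.g.\ \eqref{Stpe2}, but is not what \eqref{wn} literally states) or a careful Riemann-sum comparison. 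The paper sidesteps both issues entirely: it first extends the sum to $n\in\mathbb{Z}$ (using $\bar w(n/N)=0$ outside $[0,N]$), applies the Poisson summation formula (Lemma~\ref{po}) to convert the discrete sum exactly into $\sum_{n\in\mathbb{Z}}N\int_0^1\bar w(y)e^{2\pi iN(k\cdot\rho-n)y}\,dy$, and only then integrates by parts; there the boundary terms vanish \emph{identically} and the $L^1$ norm $\|\bar w^{(j)}\|_{L^1(0,1)}$ enters exactly as in \eqref{wn}. Your continuous-time estimate, by contrast, is fine as written, because there integration by parts against $e^{2\pi itk\cdot\rho}$ does kill boundary terms exactly. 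To repair the discrete case you should either adopt the paper's Poisson-summation reduction, or explicitly track the $O(\exp(-cN))$ boundary contributions and replace $\|\bar w^{(j)}\|_{L^1}$ by a factor controlled by $\|\bar w^{(j)}\|_{L^\infty}$.
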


The convergence in Theorem \ref{wuqiongweirenyisulv} can be of indeed  exponential as long as the adaptive function $ \varphi(x)=\sqrt{x} $ is chosen and the divergence rate of $ \tilde{\Delta}_\infty(x) $ is rapid enough, similar to Remark \ref{remark3.1} and Corollary \ref{jiexi+dio}. We present the following corollary via Diophantine rotation  as an example, which is a special case   in Corollary \ref{C1} as we forego.

\begin{corollary}\label{wuqiongweidio}
	Give $ f \in \mathcal{B}_{\tilde \Delta_\infty }  $ with $ {{\tilde \Delta }_\infty }\left( x \right) = \exp \left( {\exp \left( x \right)} \right) $, and assume that $ \rho $ satisfies the  Infinite-dimensional Diophantine condition \eqref{diowuqiong} with $ 2\leqslant \mu = \eta \in \mathbb{N}^+ $.	 Then Theorem \ref{wuqiongweirenyisulv} holds with  exponential convergence rate, i.e., $ \mathcal{O}(\exp(-N^{\upsilon })) $ and $ \mathcal{O}(\exp(-T^{\upsilon })) $ with some $ \upsilon>0 $, as long as $ N,T $ are  sufficiently large.
\end{corollary}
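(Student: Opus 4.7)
The plan is to verify hypothesis (\textbf{H4}) of Theorem \ref{wuqiongweirenyisulv} for the present data and then read off the exponential rate. Write $\mathtt{d}(|k|_\eta) = \prod_{j\in\N}(1+|k_j|^\eta\langle j\rangle^\eta)$ as in \eqref{diowuqiong} with $\mu=\eta\geq 2$, and $\tilde\Delta_\infty(y)=\exp(\exp(y))$; the adaptive function of choice will be $\varphi(x) = \sqrt{x}$.

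The first and crucial step is a sharp upper bound on $\mathtt{d}$ as a function of $N = |k|_\eta$. A naive bound using $|k_j|\langle j\rangle^\eta \leq N$ and $|\mathrm{supp}(k)| \leq N$ only gives $\log\mathtt{d}(N) = \mathcal{O}(N\log N)$, which turns out to be insufficient. Optimizing $\sum_{j\in S}\log(1+|k_j|^\eta\langle j\rangle^\eta)$ against $\sum_{j\in S}|k_j|\langle j\rangle^\eta = N$ with $|k_j|\geq 1$, using concavity of $\log(1+\cdot)$, the extremal configuration is $|k_j|=1$ on $S=\{0,1,\ldots,L-1\}$ with $L\sim N^{1/(\eta+1)}$, giving
\[\log\mathtt{d}(N) \leq C_1(\eta)\,N^{1/(\eta+1)}\log(1+N),\]
and consequently $\mathtt{d}^{-1}(z) \gtrsim (\log z)^{\eta+1}/(\log\log z)^{\eta+1}$ as $z\to+\infty$. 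The assumption $\eta\geq 2$ is used exactly here, to ensure the exponent $1/(\eta+1)$ is small enough.

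Next I would estimate the tail in (\textbf{H4}). Combining the above with the lattice count $\#\{k:|k|_\eta=N\}\leq\exp(C_2(\eta)N^{1/\eta}\log N)$ and the double-exponential damping of $\tilde\Delta_\infty$, each level-$N$ term is dominated by $\exp(-e^N/2)$ for $N$ large, so $\sum_{|k|_\eta>R}\mathtt{d}^2/\tilde\Delta_\infty\leq\exp(-e^R/2)$ for $R$ large. With $\varphi(x)=\sqrt{x}$ the threshold $R(x):=\mathtt{d}^{-1}(2\pi\gamma\sqrt{x})$ satisfies $R(x)\geq c_3(\log x)^{\eta+1}/(\log\log x)^{\eta+1}\geq\log(2cx)$ for $x$ large (since $(\log x)^\eta\gg(\log\log x)^{\eta+1}$ when $\eta\geq 2$), and hence the tail is $\leq\exp(-cx)$, verifying (\textbf{H4}). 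Theorem \ref{wuqiongweirenyisulv} then delivers rate $\exp(-\varphi(N)^{\beta_*})=\exp(-N^{\beta_*/2})$, and analogously in $T$; setting $\upsilon:=\beta_*/2>0$ concludes.

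The main obstacle is precisely the refined bound on $\mathtt{d}$. Without identifying the correct extremal configuration, $\mathtt{d}^{-1}(z)$ grows only like $\log z/\log\log z$, which is too slow to push $e^{R(x)}$ past $cx$; (\textbf{H4}) would then fail, and one could at best obtain a sub-exponential rate. This extremal analysis is where the specific choice $\mu=\eta$ with $\eta\geq 2$ enters in an essential way, mirroring the summability condition that makes the Diophantine set have full measure in Remark \ref{wuqiongweimance}.
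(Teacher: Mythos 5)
Your overall strategy — verify (\textbf{H4}) with the adaptive function $\varphi(x)=\sqrt{x}$, bound $\mathtt{d}^{-1}$ from below, combine with the lattice count $\#\{k:|k|_\eta=\nu\}$ and the double-exponential decay of $\tilde\Delta_\infty$ — is exactly the paper's strategy, and your final answer $\upsilon=\beta_*/2$ matches. The difference is in how you bound $\mathtt{d}$. You prove, via a genuinely new extremal analysis, the sublinear bound $\log\mathtt{d}(N)\lesssim_\eta N^{1/(\eta+1)}\log N$, whence $\mathtt{d}^{-1}(z)\gtrsim(\log z)^{\eta+1}/(\log\log z)^{\eta+1}$. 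The paper instead invokes Lemma~\ref{5.2} (already used in the proof of Corollary~\ref{C1}), which gives $\mathtt{d}(N)\leqslant C(\rho_*)\,e^{\rho_* N}$ for \emph{every} $\rho_*>0$; choosing $\rho_*$ small yields $\mathtt{d}^{-1}(z)\geqslant\frac{1}{2\rho_*}\log z$ for $z$ large, so the threshold $\mathtt{d}^{-1}(2\pi\gamma\sqrt{x})$ can be made $\geqslant M\log x$ for any fixed $M$, which is already enough for the tail $\lesssim\exp(-e^{M\log x}/2)=\exp(-x^M/2)$ to beat $e^{-cx}$. Your claim that without the refined extremal bound one would get only $\mathtt{d}^{-1}(z)\sim\log z/\log\log z$ (from the truly naive bound $\log\mathtt{d}\leqslant N\log N$) and that (\textbf{H4}) would therefore fail is not accurate: Lemma~\ref{5.2} sits strictly between the naive bound and your sharp one and is already sufficient, so the heavy extremal optimization is not needed. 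Relatedly, the role you assign to $\eta\geqslant 2$ (ensuring $1/(\eta+1)$ is small) is not where the paper uses it; in the paper $\eta\geqslant 2$ is a standing assumption built into $\mathbb{Z}_*^\infty$ and is invoked in the Stirling-based lattice count of Corollary~\ref{C1} (the line annotated ``since $\eta\geqslant 2$''), while $\mu=\eta$ makes Lemma~\ref{5.2} apply cleanly; even your own argument works for $\eta\geqslant 1$. Finally, your extremal step is only sketched — the Lagrangian/concavity argument you allude to needs to account for the $\langle j\rangle^\eta$ weights and the integrality of $k_j$; the conclusion is plausibly correct, but it is a nontrivial lemma in its own right and is entirely avoidable via Lemma~\ref{5.2}. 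Aside from these points, your tail computation (level count, dominance of $\exp(-e^\nu/2)$, summation) and the final read-off of the rate are in line with the paper.
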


\subsection{Cases without small divisors}\label{Cases without small divisors}
In fact, small divisors appear in the proof of Theorem \ref{T1} to Theorem \ref{wuqiongweirenyisulv} due to integration by parts, which not only brings difficulties to the proof, but also requires additional assumptions (such as (\textbf{H1}) to (\textbf{H4}), etc.), and even affects the convergence rate, e.g.,  if the divergence speed of $ \tilde{\Delta}(x) $ is so slow that the order of the integral in (\textbf{H3}) is only polynomial's type ($ N^{-m} $ with some $ m>0 $), then Theorem \ref{youxianweirenyisulv} might not admit exponential convergence. If we can  avoid the small divisors, then the above problems are solved and the resulting rate of convergence is certainly exponential. It should be pointed out that, for the discrete case with $ 1 \leqslant d \leqslant \infty $ and for the continuous case with $ 2 \leqslant d \leqslant \infty $, to avoid small divisors, one has to restrict $ f $ to  trigonometric polynomials, namely considering the following spaces
\[	{\mathcal{B}_{\tilde \Delta ,K}}: = \left\{ {f \in {\mathcal{B}_{\tilde \Delta }}:{\hat{f}_k} = 0 \text{ for all } \left\| k \right\| > K \in {\mathbb{N}^ + }} \right\},\]
and
\[	{\mathcal{B}_{\tilde \Delta_\infty ,K}}: = \left\{ {f \in {\mathcal{B}_{\tilde \Delta_\infty }}:{\hat{f}_k} = 0 \text{ for all } | k  |_\eta > K \in {\mathbb{N}^ + }} \right\},\]
provided a $ {K \in {\mathbb{N}^ + }} $. As to the continuous case with $ d=1 $, naturally there are no small divisors. The analysis becomes simpler than that before (in fact part of Theorem \ref{youxianweirenyisulv}) in the absence of  small divisors, we present Theorems \ref{without1} and \ref{without2} as follows.

\begin{theorem}\label{without1}
	Give $ f \in {\mathcal{B}_{\tilde \Delta ,K}} $ (or $f \in  \mathcal{B}_{\tilde \Delta_\infty ,K}  $). Then there exist some $ \hat{c}>0$  and ${C_7}>0 $ independent of $ N,T $, such that
	\[{\left\| {{\mathrm{\overline{WB}}_N}\left( f \right)\left( \theta  \right) - \int_{{\mathbb{T}^d}} {f( {\hat \theta } )d\hat \theta } } \right\|_\mathcal{B}} \leqslant {C_7}\exp \left( { - N^{\hat{c}}} \right),\;\;1\leqslant d \leqslant \infty\]
	and
	\[{\left\| {\frac{1}{T}\int_0^T {\bar w\left( {t/T} \right)f\left( {\rho t + \theta } \right)dt}  - \int_{{\mathbb{T}^d}} {f( {\hat \theta } )d\hat \theta } } \right\|_\mathcal{B}} \leqslant {C_7}\exp \left( { - T^{\hat{c}}} \right),\;\;2\leqslant d \leqslant \infty\]
	for $ N,T $ sufficiently large.
\end{theorem}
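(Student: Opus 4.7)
The strategy exploits that $f$ is a finite trigonometric polynomial. Writing $f = \hat f_0 + \sum_{k \in \mathcal{K}} \hat f_k\, e^{2\pi i k\cdot\theta}$ with $\mathcal{K} := \{k \neq 0 : \|k\| \leq K\}$ (resp.\ $\{k\neq 0 : |k|_\eta \leq K\}$), and noting that the latter set is finite because $|k|_\eta \leq K$ together with $\langle j\rangle^\eta \to \infty$ forces both finite support and bounded coordinates on $k$, we have $\int_{\mathbb{T}^d} f = \hat f_0$ and
\[\mathrm{\overline{WB}}_N(f)(\theta) - \int_{\mathbb{T}^d} f(\hat\theta)\, d\hat\theta = \frac{1}{A_N} \sum_{k \in \mathcal{K}} \hat f_k\, e^{2\pi i k\cdot\theta}\, S_N(k), \quad S_N(k) := \sum_{n=0}^{N-1}\bar w(n/N)\, e^{2\pi i n k\cdot\rho},\]
together with $A_N \geq c_0 N$ by Riemann-sum comparison with $\int_0^1 \bar w = 1$. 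Since $\mathcal{K}$ is finite, it suffices to produce exponential-in-$N$ decay of $|S_N(k)|$ uniformly for $k \in \mathcal{K}$.

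The key analytic input is the decay of $\widehat{\bar w}(\xi) = \int_0^1 \bar w(x)\, e^{-2\pi i \xi x}\, dx$. Because $\bar w \in \mathcal{C}_0^\infty([0,1])$ kills every boundary term, $m$-fold integration by parts combined with the sharp $L^1$-estimate \eqref{wn} gives
\[|\widehat{\bar w}(\xi)| \leq \frac{C_* m^{\beta m}}{(2\pi|\xi|)^m}, \quad m \geq 2.\]
Minimizing in $m$ with the choice $m \asymp |\xi|^{1/\beta}$ converts this into the stretched-exponential bound
\[|\widehat{\bar w}(\xi)| \leq C \exp\!\bigl(- c_1 |\xi|^{1/\beta}\bigr)\]
for absolute $C, c_1 > 0$. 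This is what distinguishes the present setting from the earlier theorems of the paper: with no $\Delta$-factors contaminating the estimate, the raw smoothness of $\bar w$ directly produces exponential decay.

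To pass from $\widehat{\bar w}$ to $S_N(k)$, I apply Poisson summation. Extending $\bar w$ by zero outside $[0,1]$ (nothing is added at the endpoints since $\bar w(0) = \bar w(1) = 0$) yields $S_N(k) = N \sum_{j \in \mathbb{Z}} \widehat{\bar w}(N(j - k\cdot\rho))$. Nonresonance of $\rho$ and finiteness of $\mathcal{K}$ guarantee that
\[\delta := \min\bigl\{\mathrm{dist}(k\cdot\rho, \mathbb{Z}) : k \in \mathcal{K}\bigr\} > 0\]
is a strictly positive constant independent of $N$. The dominant term has argument of modulus $\geq N\delta$, while the remaining arguments grow at least linearly in $|j|$, so the stretched-exponential decay makes the tail geometrically summable, yielding $|S_N(k)| \leq CN\exp(-c_1(N\delta)^{1/\beta})$. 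Combining with $A_N \geq c_0 N$ and summing over $\mathcal{K}$ produces the discrete estimate with $\hat c$ any positive number strictly less than $1/\beta$, absorbing the $\delta$- and $K$-dependence into $C_7$.

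The continuous statement follows by the same template, but more cheaply: the substitution $s = t/T$ identifies $\frac{1}{T}\int_0^T \bar w(t/T) e^{2\pi i tk\cdot\rho}\, dt = \widehat{\bar w}(-Tk\cdot\rho)$ exactly, sidestepping Poisson summation; the hypothesis $d \geq 2$ enters here to guarantee $|k\cdot\rho| \geq \delta' > 0$ on $\mathcal{K}$ via the continuous-case nonresonance \eqref{nonresonant2}, and the stretched-exponential bound applied at $\xi = -Tk\cdot\rho$ closes the estimate. The main obstacle I expect is clean bookkeeping in the Poisson tail so that the dominant-term decay controls the series uniformly in $k \in \mathcal{K}$, together with the mild loss of exponent $\hat c < 1/\beta$ needed to make $\hat c$ independent of every parameter except $\bar w$ itself.
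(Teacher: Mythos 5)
Your proposal is correct and follows essentially the same route the paper takes: the paper proves Theorem \ref{without1} by referring back to the proof of Theorem \ref{youxianweirenyisulv} and observing that for a trigonometric polynomial the tail index set $\Lambda_2$ is empty once $N$ is large, so only the principal term ${\tt S}_1$ survives, and that term is estimated by exactly the ingredients you use --- Poisson summation, integration by parts an $N$-dependent number of times, the $L^1$ bound \eqref{wn}, and the optimization over the number of integrations that yields \eqref{mNguji}; packaging that optimization as a stretched-exponential bound $|\widehat{\bar w}(\xi)|\leqslant C\exp(-c_1|\xi|^{1/\beta})$ is just a cleaner way of presenting the same computation specialized to a finite $\mathcal K$ where the small divisors are bounded below by a fixed $\delta>0$. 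One small misattribution worth noting: you credit the hypothesis $d\geqslant 2$ with providing the lower bound $|k\cdot\rho|\geqslant\delta'>0$ in the continuous case, but in fact that bound holds on a finite $\mathcal K$ for every $d\geqslant 1$ by nonresonance; the restriction $d\geqslant 2$ in the statement only reflects that the $d=1$ continuous case has no small divisors for general $f$ and is treated in greater generality by Theorem \ref{without2}.
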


\begin{theorem}\label{without2}
	Give $ f \in \mathcal{B}_{\tilde{\Delta}} $ with $ d=1 $. Assume that
	\begin{equation}\label{without2-1}
		\sum\limits_{k \ne 0} {\frac{1}{{\tilde \Delta \left( {\left| k \right|} \right)}}}  <  + \infty .
	\end{equation}
	Then there exist some $ \hat{c}>0$  and ${C_8}>0 $ independent of $ T $, such that
	\[{\left\| {\frac{1}{T}\int_0^T {\bar w\left( {t/T} \right)f\left( {\rho t + \theta } \right)dt}  - \int_{{\mathbb{T}^1}} {f( {\hat \theta } )d\hat \theta } } \right\|_\mathcal{B}} \leqslant {C_8}\exp \left( { -  T^{\hat{c}}} \right)\]
	for $ T $ sufficiently large.
\end{theorem}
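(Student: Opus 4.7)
The plan is to substitute the Fourier expansion of $f$ into the continuous weighted average and reduce the problem to a sum of scalar oscillatory integrals. Assumption \eqref{without2-1} combined with $f\in\mathcal{B}_{\tilde\Delta}$ yields
\[
\sum_{k\neq 0}\|\hat f_k\|_{\mathcal B}\leq M_f\sum_{k\neq 0}\tilde\Delta(|k|)^{-1}<+\infty,\qquad M_f:=\sup_{k\neq 0}\tilde\Delta(|k|)\|\hat f_k\|_{\mathcal B},
\]
so the Fourier series $\sum_k \hat f_k e^{2\pi i k(\rho t+\theta)}$ converges absolutely and uniformly in $t$ in the $\mathcal B$-norm, which legitimizes term-by-term integration. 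After the change of variables $s=t/T$, the $k=0$ contribution produces exactly $\hat f_0=\int_{\mathbb{T}^1}f\,d\hat\theta$ since $\int_0^1\bar w=1$, and the task reduces to controlling, for each $k\neq 0$, the scalar oscillatory integral $I_k(T):=\int_0^1\bar w(s)\,e^{2\pi ik\rho Ts}\,ds$.

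Next I would integrate by parts $n$ times. Since $\bar w\in\mathcal C_0^\infty([0,1])$, every boundary term vanishes at all orders, so the $L^1$ estimate \eqref{wn} gives
\[
|I_k(T)|\leq \frac{\|\bar w^{(n)}\|_{L^1(0,1)}}{(2\pi|k\rho|T)^n}\leq \frac{C_*\,n^{\beta n}}{(2\pi|\rho|T)^n\,|k|^n},\qquad n\geq 2.
\]
The essential feature of the one-dimensional continuous setting is that there are \emph{no small divisors}: for every $0\neq k\in\mathbb Z$ one has $|k\rho|\geq|\rho|$, so the nonresonance function $\Delta$ never enters, and no analogue of (\textbf{H3}) or (\textbf{H4}) is needed. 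Using $|k|^{-n}\leq 1$ and summing over $k\neq 0$ yields the global bound
\[
\left\|\frac{1}{T}\int_0^T\bar w(t/T)f(\rho t+\theta)\,dt-\int_{\mathbb T^1}f\,d\hat\theta\right\|_{\mathcal B}\leq \frac{C_*\,n^{\beta n}}{(2\pi|\rho|T)^n}\sum_{k\neq 0}\|\hat f_k\|_{\mathcal B}.
\]

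The final step is to optimize in $n$. Writing $x:=2\pi|\rho|T$, the factor $(n^\beta/x)^n=\exp\bigl(n(\beta\log n-\log x)\bigr)$ is minimized over real $n$ near $n_*=e^{-1}x^{1/\beta}$, where its value equals $\exp(-\beta n_*)$. Choosing the integer $n=\lfloor n_*\rfloor$, which is $\geq 2$ for $T$ large enough, produces an upper bound of the form $C_8\exp\bigl(-\tilde c\,T^{1/\beta}\bigr)$ for some $\tilde c>0$. Absorbing $\tilde c$ by lowering the exponent slightly, one obtains the advertised bound $C_8\exp(-T^{\hat c})$ with any $\hat c<1/\beta$, completing the proof.

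I do not expect a significant obstacle: the one-dimensional continuous case is the cleanest instance of the machinery of Section \ref{sec3}, and the absence of small divisors removes the delicate balance between $\Delta$ and $\tilde\Delta$ that complicates (\textbf{H3}). The only technical care needed is (i) justifying the termwise integration, which is precisely what \eqref{without2-1} provides (it replaces, in a very mild form, the much stronger smallness condition (\textbf{H3})), and (ii) ensuring that the optimal integer $n$ is $\geq 2$ so that \eqref{wn} applies, which simply requires $T$ sufficiently large.
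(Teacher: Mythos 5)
Your proof is correct and follows essentially the same route as the paper's (very brief) sketch: Fourier expansion, integration by parts exploiting the absence of small divisors when $d=1$ in the continuous case (so $|k\rho|\geqslant|\rho|>0$ for $k\ne 0$), optimization of the number of integrations by parts against the derivative bound \eqref{wn}, and use of \eqref{without2-1} to control the sum over $k$.
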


\section{Proof of results}
\subsection{Proof of Theorem \ref{T1}}
We first prove the discrete case \eqref{a.e.}, and some useful estimates should be provided.
%First we give  to prove Theorem \ref{T1}.

Note that $ w \in \mathcal{C}_0^m\left( {\left( {0,1} \right)} \right) $, then there exists $ {C_w}>0 $ such that
\begin{equation}\label{Cw}
	\frac{N}{{{A_N}}} = {\left( {\frac{1}{N}\sum\limits_{n = 0}^{N - 1} {w\left( {\frac{n}{N}} \right)} } \right)^{ - 1}} \leqslant {C_w},\;\;\forall N \in \mathbb{N}^+.
\end{equation}
%In fact, the weighting function $ w $ we usually choose is always symmetric and has certain monotonicity, hence $ C_w $ can be independent of $ N $, it only depends on $ ||w||_{{{L^1}\left( {0,1} \right)}} $ and $ ||w||_{{{C^0}}} $.

Integrating by parts $ m $ times yields that
\begin{align}
	\left| {\int_0^1 {w\left( y \right){e^{2N\pi i\left( {k \cdot \rho  - n} \right)y}}dy} } \right| &= \frac{1}{{2N\pi \left| {k \cdot \rho  - n} \right|}}\left| {\int_0^1 {w\left( y \right)d{e^{2N\pi i\left( {k \cdot \rho  - n} \right)y}}} } \right| \notag \\
	& = \frac{1}{{2N\pi \left| {k \cdot \rho  - n} \right|}}\left| {\int_0^1 {{w^{\left( 1 \right)}}\left( y \right){e^{2N\pi i\left( {k \cdot \rho  - n} \right)y}}dy} } \right|\notag \\
    & \cdots \notag \\
	&= \frac{1}{{{{\left( {2N\pi \left| {k \cdot \rho  - n} \right|} \right)}^m}}}\left| {\int_0^1 {{w^{\left( m \right)}}\left( y \right){e^{2N\pi i\left( {k \cdot \rho  - n} \right)y}}dy} } \right|\notag \\
	\label{Nm}& \leqslant \frac{1}{{{{\left( {2N\pi \left| {k \cdot \rho  - n} \right|} \right)}^m}}}||{w^{\left( m \right)}}||_{{{L^1}\left( {0,1} \right)}},
\end{align}
where we use the fact $ w \in \mathcal{C}_0^m\left( {\left( {0,1} \right)} \right) $ to eliminate the boundary terms.

For any fixed $ 0 \ne k \in {\mathbb{Z}^d} $, denote by $ {n_{k}} \in \mathbb{N} $  the closest integer to the number $ {k \cdot \rho } $. Note that $ m \geqslant 2 $. Therefore by \eqref{nonresonant} and $ \Delta(1)=1 $ we have
\begin{align}
	\sum\limits_{n = -\infty}^{+\infty} {\frac{1}{{{{\left| {k \cdot \rho  - n} \right|}^m}}}}  &= \frac{1}{{{{\left| {k \cdot \rho  - {n_{k}}} \right|}^m}}} + \sum\limits_{n \ne {n_{k}}} {\frac{1}{{{{\left| {k \cdot \rho  - n} \right|}^m}}}} \notag \\
	& \leqslant \frac{{{\Delta ^m}\left( {||k||} \right)}}{{{\alpha ^m}}} + 2\sum\limits_{n = 0}^\infty  {\frac{1}{{{{\left( {|n| + 1/2} \right)}^m}}}} \notag \\
	\label{deltam}&\leqslant {C_{\alpha ,m}}{\Delta ^m}\left( {||k||} \right),
\end{align}
because after being far away from the fixed number $ k \cdot \rho $, the rest summation of series naturally converges and is independent of the small divisor.

Note that
\[{\hat f_0} = \int_{{\mathbb{T}^d}} {f( \hat\theta  )d\hat\theta }  = {\mathrm{WB}_N}\left( {{f_0}} \right).\]
Then it follows that
\begin{align}
	%{\varepsilon _N}: &= \left| {W{B_N}\left( f \right)\left( \theta  \right) - \int_{{\mathbb{T}^d}} {f\left( \hat\theta  \right)d\hat\theta } } \right|\notag \\
	{\varepsilon _N}\left(\theta\right): &={\left\| {{\mathrm{WB}_N}\left( f \right)\left( \theta  \right) - \int_{{\mathbb{T}^d}} {f( {\hat \theta } )d\hat \theta } } \right\|_{\mathcal{B}}}\notag \\
	& \leqslant \sum\limits_{0 \ne k \in {\mathbb{Z}^d}} {\left\| {{\hat  f_k}} \right\|_{\mathcal{B}}\left|{{\mathrm{WB}_N}\left( {{e^{2\pi ik \cdot \theta }}} \right)}\right|}\notag \\
	\label{xishu}& \leqslant {C_{f,\tilde{\Delta} }}\sum\limits_{0 \ne k \in {\mathbb{Z}^d}} {\frac{1}{{\tilde \Delta \left( {||k||} \right)}}\left| {\frac{1}{{{A_N}}}\sum\limits_{n = 0}^{N - 1} {w\left( {\frac{n}{N}} \right){e^{2\pi ik \cdot \left( {\theta  + n\rho } \right)}}} } \right|}  \\
	& = \frac{{{C_{f,\tilde{\Delta} }}}}{{{A_N}}}\sum\limits_{0 \ne k \in {\mathbb{Z}^d}} {\frac{1}{{\tilde \Delta \left( {||k||} \right)}}\left| {\sum\limits_{n = 0}^{N - 1} {w\left( {\frac{n}{N}} \right){e^{2\pi ink \cdot \rho }}} } \right|} \notag \\
\label{shuangwuqiong}	& = \frac{{{C_{f,\tilde{\Delta} }}}}{{{A_N}}}\sum\limits_{0 \ne k \in {\mathbb{Z}^d}} {\frac{1}{{\tilde \Delta \left( {||k||} \right)}}\left| {\sum\limits_{n = -\infty}^{+\infty} {w\left( {\frac{n}{N}} \right){e^{2\pi ink \cdot \rho }}} } \right|}  \\
	\label{poission}& = \frac{{{C_{f,\tilde{\Delta} }}}}{{{A_N}}}\sum\limits_{0 \ne k \in {\mathbb{Z}^d}} {\frac{1}{{\tilde \Delta \left( {||k||} \right)}}\left| {\sum\limits_{n = -\infty}^{+\infty} {\int_{ - \infty }^{ + \infty } {w\left( {\frac{t}{N}} \right){e^{2\pi itk \cdot \rho }}{e^{ - 2\pi itn}}dt} } } \right|}  \\
	\label{poissionpoission}& =C_{f,\tilde{\Delta} } \frac{{{}N}}{{{A_N}}}\sum\limits_{0 \ne k \in {\mathbb{Z}^d}} {\frac{1}{{\tilde \Delta \left( {||k||} \right)}}\sum\limits_{n = -\infty}^{+\infty} {\left| {\int_0^1 {w\left( y \right){e^{2N\pi i\left( {k \cdot \rho  - n} \right)y}}dy} } \right|} }   \\
	\label{2.4}&  \leqslant {C_{f,\tilde{\Delta} ,w}}\sum\limits_{0 \ne k \in {\mathbb{Z}^d}} {\frac{1}{{\tilde \Delta \left( {||k||} \right)}}\sum\limits_{n = -\infty}^{+\infty} {\left| {\int_0^1 {w\left( y \right){e^{2N\pi i\left( {k \cdot \rho  - n} \right)y}}dy} } \right|} }   \\
	\label{2.5}& \leqslant {C_{f,\tilde{\Delta} ,w}}\sum\limits_{0 \ne k \in {\mathbb{Z}^d}} {\frac{1}{{\tilde \Delta \left( {||k||} \right)}}\sum\limits_{n = -\infty}^{+\infty} {\frac{1}{{{{\left( {2N\pi \left| {k \cdot \rho  - n} \right|} \right)}^m}}}||{w^{\left( m \right)}}|{|_{{L^1}\left( {0,1} \right)}}} } \\
	&\leqslant \frac{{{C_{f,\tilde{\Delta} ,w,m}}}}{{{N^m}}}\sum\limits_{0 \ne k \in {\mathbb{Z}^d}} {\frac{1}{{\tilde \Delta \left( {||k||} \right)}}\sum\limits_{n = -\infty}^{+\infty} {\frac{1}{{{{\left| {k \cdot \rho  - n} \right|}^m}}}} } \notag \\
	\label{2.6}& \leqslant \frac{{{C_{f,\tilde{\Delta} ,w,m,\alpha }}}}{{{N^m}}}\sum\limits_{0 \ne k \in {\mathbb{Z}^d}} {\frac{{{\Delta ^m}\left( {||k||} \right)}}{{\tilde \Delta \left( {||k||} \right)}}} \\
	& \leqslant \frac{{{C_1 }}}{{{N^m}}}\int_1^{ + \infty } {\frac{{{r^{d - 1}}{\Delta ^m}\left( r \right)}}{{\tilde \Delta \left( r \right)}}dr} \notag \\
	\label{youjie}& \leqslant \frac{{{C_1 }}}{{{N^m}}},
\end{align}
provided a universal constant $ {C_1 } = {C_{f,\Delta ,\tilde \Delta ,w,\alpha ,m,d}} > 0 $. Here \eqref{Bdelta} is used in \eqref{xishu}, \eqref{shuangwuqiong} is because $ w(n/N)=0 $ for $ n \in \Z \backslash \{0,1, \cdots, N-1\} $ (note that $ w \in \mathcal{C}_0^m\left( {\left( {0,1} \right)} \right) $), the Poisson summation formula in Lemma \ref{po} is used in \eqref{poission},  \eqref{Cw} is used in \eqref{2.4}, \eqref{Nm} is used in \eqref{2.5}, \eqref{deltam} is used in \eqref{2.6}, and finally, \eqref{youjie} is because of (\textbf{H1}). This finishes the proof of the discrete case  \eqref{a.e.}.

As to the continuous case \eqref{T1-2}, the analysis is similar. This proves Theorem \ref{T1}.%In fact, at this point we only need to require that \eqref{nonresonant} holds for $ n=0 $, see the comment (4) as above.

\subsection{Proof of Theorem \ref{T3}}
We first prove the discrete case \eqref{333}. Note that
\begin{align*}
	\left| {\frac{1}{{{A_N}}}\sum\limits_{n = 0}^{N - 1} {w\left( {\frac{n}{N}} \right){e^{2\pi nik \cdot \rho }}} } \right| &\leqslant \frac{N}{{{A_N}}}\sum\limits_{n = -\infty}^{+\infty}\frac{1}{{{{\left( {2N\pi \left| {k \cdot \rho  - n} \right|} \right)}^m}}}{\left\| {{w^{\left( m \right)}}} \right\|_{{L^1}\left( {0,1} \right)}} \\
	&\leqslant \frac{{{C_{w,m}}}}{{{N^m}}}\sum\limits_{n = -\infty}^{+\infty}\frac{1}{{{{\left| {k \cdot \rho  - n} \right|}^m}}} \\
	&\leqslant \frac{{{C_{w,m,\gamma }}}}{{{N^m}}}{\tt{d}^m}\left( {{{\left| k \right|}_\eta }} \right).
\end{align*}
Then recalling the proof of Theorem \ref{T1} and (\textbf{H2}), we obtain that
\begin{align*}
	{\varepsilon _N}\left( \theta  \right): &= {\left\| {{\mathrm{WB}_N}\left( f \right)\left( \theta  \right) - \int_{{\mathbb{T}^\infty }} {f( {\hat \theta } )d\hat \theta } } \right\|_{\mathcal{B}}}\\
	& \leqslant \sum\limits_{0 \ne k \in \mathbb{Z}_ * ^\infty } {{{\left\| {{\hat f_k}} \right\|}_{\mathcal{B}}}{{\left| {{\mathrm{WB}_N}\left( {{e^{2\pi ik \cdot \theta }}} \right)} \right|}}} \\
	& = \sum\limits_{0 \ne k \in \mathbb{Z}_ * ^\infty } {{{\left\| {{\hat f_k}} \right\|}_{\mathcal{B}}}\left| {\frac{1}{{{A_N}}}\sum\limits_{n = 0}^{N - 1} {w\left( {\frac{n}{N}} \right){e^{2\pi ik \cdot \left( {\theta  + n\rho } \right)}}} } \right|} \\
	& \leqslant {C_{f,{{\tilde \Delta }_\infty }}}\sum\limits_{0 \ne k \in \mathbb{Z}_ * ^\infty } {\frac{1}{{{{\tilde \Delta }_\infty }\left( {{{\left| k \right|}_\eta }} \right)}}\left| {\frac{1}{{{A_N}}}\sum\limits_{n = 0}^{N - 1} {w\left( {\frac{n}{N}} \right){e^{2\pi nik \cdot \rho }}} } \right|} \\
	& \leqslant \frac{{{C_{f,{{\tilde \Delta }_\infty }}} \cdot {C_{w,m,\gamma }}}}{{{N^m}}}\sum\limits_{0 \ne k \in \mathbb{Z}_ * ^\infty } {\frac{{{{\tt d}^m}\left( {{{\left| k \right|}_\eta }} \right)}}{{{{\tilde \Delta }_\infty }\left( {{{\left| k \right|}_\eta }} \right)}}} \\
	& \leqslant \frac{{{C_2 }}}{{{N^m}}},
\end{align*}
where the positive constant $ C_2 $ only depends on $ f,{\tt{d}} ,{\tilde \Delta }_\infty,\eta ,w,\gamma ,m $. We therefore finish the proof of  the discrete case \eqref{333}.

As to the continuous case  \eqref{T2-2}, the analysis is similar. This proves Theorem \ref{T3}.
%In fact, at this point we only need to require that \eqref{Infinite-dimensional nonresonant condition} holds for $ n=0 $, see Remark \ref{remark1.2}.

\subsection{Proof of Corollary \ref{C1}}
One just needs to verify (\textbf{H2}).

Note that for all $\mu  > 1 $ and fixed $ \rho_*  = 1/2m>0$, we have
\[{\tt d}\left( {{{\left| k \right|}_\eta }} \right) = \prod\limits_{j \in \mathbb{N}} {\left( {1 + {{\left| {{k_j}} \right|}^\mu }{{\left\langle j \right\rangle }^\mu }} \right)}  \leqslant \exp \left( {\frac{\tau }{{{\rho_* ^{1/\eta }}}}\log \left( {\frac{\tau }{\rho_* }} \right)} \right) \cdot {e^{\rho_* {{\left| k \right|}_\eta }}}\]
with some $ \tau  = \tau \left( {\eta ,\mu } \right) > 0 $ for all $ 0 \ne k \in \mathbb{Z}_ * ^\infty  $, see Lemma \ref{5.2}.  Recall $ 2 \leqslant \eta  \in {\mathbb{N}^ + } $, then
\begin{equation}\label{fan}
	{\left| k \right|_\eta } = \sum\limits_{j \in \mathbb{N}} {{{\left\langle j \right\rangle }^\eta }\left| {{k_j}} \right|}  \in {\mathbb{N}^ + }.
\end{equation}
Thus
\begin{align}
	\sum\limits_{0 \ne k \in \mathbb{Z}_ * ^\infty } {\frac{{{{\tt d}^m}\left( {{{\left| k \right|}_\eta }} \right)}}{{{{\tilde \Delta }_\infty }\left( {{{\left| k \right|}_\eta }} \right)}}}  &\leqslant {C_{\eta ,\mu,m  }}\sum\limits_{0 \ne k \in \mathbb{Z}_ * ^\infty } {\frac{{{e^{m\rho_* {{\left| k \right|}_\eta }}}}}{{{{\tilde \Delta }_\infty }\left( {{{\left| k \right|}_\eta }} \right)}}} \notag \\
	& = {C_{\eta ,\mu,m  }}\sum\limits_{\nu  = 1}^\infty  {\left( {\sum\limits_{0 \ne k \in \mathbb{Z}_ * ^\infty ,{{\left| k \right|}_\eta } = \nu } {\frac{1}{{{{\tilde \Delta }_\infty }\left( {{{\left| k \right|}_\eta }} \right)}}{e^{m\rho_* {{\left| k \right|}_\eta }}}} } \right)} \notag \\
	\label{3-1}& = {C_{\eta ,\mu,m  }}\sum\limits_{\nu  = 1}^\infty  {\left( {\frac{1}{{{{\tilde \Delta }_\infty }\left( \nu  \right)}}{e^{m\rho_* \nu }}\sum\limits_{0 \ne k \in \mathbb{Z}_ * ^\infty ,{{\left| k \right|}_\eta } = \nu } 1 } \right)} .
\end{align}
Denote
\begin{equation}\label{hhhhh}
	\sum\limits_{0 \ne k \in \mathbb{Z}_ * ^\infty ,{{\left| k \right|}_\eta } = \nu } 1  = \# \left\{ {k:0 \ne k \in \mathbb{Z}_ * ^\infty ,{{\left| k \right|}_\eta } = \nu  \in {\mathbb{N}^ + }} \right\}.
\end{equation}
Hence, in view of \eqref{fan}, the  largest non-zero integer $ {j_{\max }} $ in \eqref{hhhhh}  satisfies $ {j_{\max }} \leqslant \left[ {{\nu ^{1/\eta }}} \right] $, and that's why we need a certain spatial structure. Therefore, we have
\begin{align}
	&\# \left\{ {k:0 \ne k \in \mathbb{Z}_ * ^\infty ,{{\left| k \right|}_\eta } = \nu  \in {\mathbb{N}^ + }} \right\} \notag \\
	\leqslant &\# \left\{ {k:0 \ne k \in \mathbb{Z}_ * ^\infty ,|{k_0}| + |{k_1}| +  \cdots  + |{k_{\left[ {{\nu ^{1/\eta }}} \right]}}| = \nu  \in {\mathbb{N}^ + }} \right\}\notag \\
	\leqslant &{2^{\left[ {{\nu ^{1/\eta }}} \right] + 1}} \cdot \# \left\{ {k:0 \ne k \in \mathbb{Z}_ * ^\infty ,\text{$ {k_j} \in \mathbb{N} $ for all $ j \in \mathbb{N} $},{k_0} + {k_1} +  \cdots  + {k_{\left[ {{\nu ^{1/\eta }}} \right]}} = \nu  \in {\mathbb{N}^ + }} \right\}\notag \\
	= &{2^{\left[ {{\nu ^{1/\eta }}} \right] + 1}} \cdot C_{\nu  + \left[ {{\nu ^{1/\eta }}} \right]}^\nu \notag \\
	\label{CCCC}\leqslant &{2^{\left[ {{\nu ^{1/\eta }}} \right] + 1}} \cdot {C_\eta }\frac{1}{{\sqrt {{\nu ^{1/\eta }}} }} \cdot {\nu ^{\left( {1 - 1/\eta } \right)\left( {{\nu ^{1/\eta }} + 1} \right)}} \cdot {e^{\left[ {{\nu ^{1/\eta }}} \right]}} \\
	\label{sssss} \leqslant &  {C_\eta }{\nu ^{{\nu ^{1/\eta }}}}.
\end{align}
Here \eqref{CCCC} uses the following fact:
\begin{align*}
	&\;\;\;\;\;\;C_{\nu  + \left[ {{\nu ^{1/\eta }}} \right]}^\nu \\ 
	&= \frac{{\left( {\nu  + \left[ {{\nu ^{1/\eta }}} \right]} \right)!}}{{\nu !\left( {\left[ {{\nu ^{1/\eta }}} \right]} \right)!}} \sim \frac{{\sqrt {2\pi \left( {\nu  + \left[ {{\nu ^{1/\eta }}} \right]} \right)} {{\left( {\frac{{\nu  + \left[ {{\nu ^{1/\eta }}} \right]}}{e}} \right)}^{\nu  + \left[ {{\nu ^{1/\eta }}} \right]}}}}{{\sqrt {2\pi \nu } {{\left( {\frac{\nu }{e}} \right)}^\nu } \cdot \sqrt {2\pi \left[ {{\nu ^{1/\eta }}} \right]} {{\left( {\frac{{\left[ {{\nu ^{1/\eta }}} \right]}}{e}} \right)}^{\left[ {{\nu ^{1/\eta }}} \right]}}}}\\
	& \sim \frac{1}{{\sqrt {2\pi {\nu ^{1/\eta }}} }} \cdot {\left( {1 + \frac{{\left[ {{\nu ^{1/\eta }}} \right]}}{\nu }} \right)^\nu } \cdot {\left( {\frac{\nu }{{\left[ {{\nu ^{1/\eta }}} \right]}}} \right)^{\left[ {{\nu ^{1/\eta }}} \right]}} \cdot {\left( {1 + \frac{{\left[ {{\nu ^{1/\eta }}} \right]}}{\nu }} \right)^{\left[ {{\nu ^{1/\eta }}} \right]}}\\
	& = \frac{1}{{\sqrt {2\pi {\nu ^{1/\eta }}} }} \cdot {\left( {\frac{\nu }{{\left[ {{\nu ^{1/\eta }}} \right]}}} \right)^{\left[ {{\nu ^{1/\eta }}} \right]}} \cdot \exp \left( {\nu \log \left( {1 + \frac{{\left[ {{\nu ^{1/\eta }}} \right]}}{\nu }} \right)} \right) \\
	&\;\;\;\;\cdot \exp \left( {\left[ {{\nu ^{1/\eta }}} \right]\log \left( {1 + \frac{{\left[ {{\nu ^{1/\eta }}} \right]}}{\nu }} \right)} \right)\\
	& = \frac{1}{{\sqrt {2\pi {\nu ^{1/\eta }}} }} \cdot {\left( {\frac{\nu }{{\left[ {{\nu ^{1/\eta }}} \right]}}} \right)^{\left[ {{\nu ^{1/\eta }}} \right]}} \cdot \exp \left( {\nu \left( {\frac{{\left[ {{\nu ^{1/\eta }}} \right]}}{\nu } - \frac{1}{2}\frac{{{{\left[ {{\nu ^{1/\eta }}} \right]}^2}}}{{{\nu ^2}}} +  \cdots } \right)} \right) \\
	&\;\;\;\;\cdot \exp \left( {\left[ {{\nu ^{1/\eta }}} \right]\left( {\frac{{\left[ {{\nu ^{1/\eta }}} \right]}}{\nu } +  \cdots } \right)} \right)\\
	& = \frac{1}{{\sqrt {2\pi {\nu ^{1/\eta }}} }} \cdot {\left( {\frac{\nu }{{\left[ {{\nu ^{1/\eta }}} \right]}}} \right)^{\left[ {{\nu ^{1/\eta }}} \right]}} \cdot \exp \left( {\left[ {{\nu ^{1/\eta }}} \right] - \frac{{{{\left[ {{\nu ^{1/\eta }}} \right]}^2}}}{{2\nu }} +  \cdots } \right) \cdot \exp \left( {\frac{{{{\left[ {{\nu ^{1/\eta }}} \right]}^2}}}{\nu } +  \cdots } \right)\\
	& = \frac{1}{{\sqrt {2\pi {\nu ^{1/\eta }}} }} \cdot {\left( {\frac{\nu }{{\left[ {{\nu ^{1/\eta }}} \right]}}} \right)^{\left[ {{\nu ^{1/\eta }}} \right]}} \cdot \exp \left( {\left[ {{\nu ^{1/\eta }}} \right] + \mathcal{O}\left( 1 \right)} \right) \cdot \exp \left( {\mathcal{O}\left( 1 \right)} \right) (\text{since $ \eta \geqslant 2 $})\\
	& \leqslant {C_\eta }\frac{1}{{\sqrt {{\nu ^{1/\eta }}} }} \cdot {\nu ^{\left( {1 - 1/\eta } \right)\left( {{\nu ^{1/\eta }} + 1} \right)}} \cdot {e^{\left[ {{\nu ^{1/\eta }}} \right]}}.
\end{align*}

Finally, combining \eqref{3-1}, \eqref{hhhhh} and \eqref{sssss} we arrive at
\begin{align}
	\sum\limits_{0 \ne k \in \mathbb{Z}_ * ^\infty } {\frac{{{{\tt d}^m}\left( {{{\left| k \right|}_\eta }} \right)}}{{{{\tilde \Delta }_\infty }\left( {{{\left| k \right|}_\eta }} \right)}}} & \leqslant {C_{\eta ,\mu,m  }}\sum\limits_{\nu  = 1}^\infty  {\left( {\frac{1}{{{{\tilde \Delta }_\infty }\left( \nu  \right)}}{e^{m\rho_* \nu }}\sum\limits_{0 \ne k \in \mathbb{Z}_ * ^\infty ,{{\left| k \right|}_\eta } = \nu } 1 } \right)} \notag \\
	&\leqslant {C_{\eta ,\mu ,m }}\sum\limits_{\nu  = 1}^\infty  {\left( {\frac{1}{{{{\tilde \Delta }_\infty }\left( \nu  \right)}}{e^{m\rho_* \nu }} \cdot {C_{\eta  } }{\nu ^{{\nu ^{1/\eta }}}}} \right)} \notag \\
	&  \leqslant {C_{\eta ,\mu ,m}}\sum\limits_{\nu  = 1}^\infty  {\left( {\frac{{{\nu ^{{\nu ^{1/\eta }}}}}}{{{e^{\nu \left( {1 - m\rho_* } \right)}}}}} \right)}  \notag \\
	& = {C_{\eta ,\mu ,m}}\sum\limits_{\nu  = 1}^\infty  {\left( {\frac{1}{{{e^{\nu /2 - {\nu ^{1/\eta }}\log \nu }}}}} \right)} \notag \\
	& \leqslant {C_{\eta ,\mu ,m}}\sum\limits_{\nu  = 1}^\infty  {\left( {\frac{1}{{{e^{\nu /4}}}}} \right)} \notag \\
	& \leqslant C_3 \notag\\
	&<  + \infty
\end{align}
for a universal positive constant $ C_ 3 $, since $ {{\tilde \Delta }_\infty }\left( x \right) = {\exp(x)} $ (see Remark \ref{remacoro2.11}), i.e., (\textbf{H2}) holds. Then we finish the proof by applying Theorem \ref{T3}.

\subsection{Proof of Theorem \ref{youxianweirenyisulv}}\label{ProofofT3}
Intuitively, let's first present an explanation for why the exponential rate can be achieved. For cases without small divisors, let us take $ f $ be a trigonometric polynomial as an example. In view of the estimates \eqref{wn} of the higher order derivatives of the weighting function $ \bar{w} $, we could change the times of integration by parts to achieve the fastest convergence rate under this approach (monotonicity analysis is sufficient), that is, an exponential convergence. Therefore for a general $ f $, if its Fourier coefficients converge rapidly enough, then intuitively it behaves like a trigonometric polynomial. One only needs to truncate the Fourier series into the principal and remainder terms with respect to the given $ N \in \mathbb{N}^+ $ and the chosen adaptive function $ \varphi(x) $ at this point. Specifically, for the principal term we could perform the above operation (integration by parts of varying times), and for the remainder we just employ the analysis of Theorem \ref{T1} (integration by parts of fixed times).

We first prove the discrete case \eqref{333-1}, and the proof is divided into four steps.
\\
{\bf{Step1:}} For given  adaptive function $  \varphi  $  and $ N \in \mathbb{N}^+$ sufficiently large, define
\begin{align*}
	{\Lambda _1}&: = \left\{ {k:0 \ne k \in {\mathbb{Z}^d},||k|| \leqslant {\Delta ^{ - 1}}\left( {2\pi \alpha N/\varphi \left( N \right)} \right)} \right\},\\
	{\Lambda _2}&: = \left\{ {k:0 \ne k \in {\mathbb{Z}^d},||k|| > {\Delta ^{ - 1}}\left( {2\pi \alpha N/\varphi \left( N \right)} \right)} \right\}.
\end{align*}
This gives $ {\Lambda _1}\bigcap {{\Lambda _2}} {\text{ = }}\phi  $ and $ {\Lambda _1}\bigcup {{\Lambda _2}} {\text{ = }}\left\{ {k:0 \ne k \in {\mathbb{Z}^d}} \right\} $. Further, one notices that $ \left| {{\Lambda _1}} \right|,\left| {{\Lambda _2}}\right| \to  + \infty  $ when $ N \to +\infty $ because $ \varphi(x)=o(x) $ and $ \Delta^{-1}(+\infty)=+\infty $. At this point, we have
\begin{align}
	&{\left\| {{\mathrm{\overline{WB}}_N}\left( f \right)\left( \theta  \right) - \int_{{\mathbb{T}^d}} {f( {\hat \theta } )d\hat \theta } } \right\|_\mathcal{B}}\notag \\
	\leqslant &{C_{f,\tilde \Delta }}\sum\limits_{0 \ne k \in {\mathbb{Z}^d}} {\frac{1}{{\tilde \Delta \left( {\left\| k \right\|} \right)}}\sum\limits_{n = -\infty}^{+\infty} {\left| {\int_0^1 {\bar w\left( y \right){e^{2\pi iN\left( {k \cdot \rho  - n} \right)y}}dy} } \right|} } \notag \\
	\leqslant &{C_{f,\tilde \Delta }}\left( \begin{gathered}
		\sum\limits_{k \in {\Lambda _1}} {\frac{1}{{\tilde \Delta \left( {||k||} \right)}}\sum\limits_{n =  - \infty }^{ + \infty } {\left| {\int_0^1 {\bar w\left( y \right){e^{2\pi iN\left( {k \cdot \rho  - n} \right)y}}dy} } \right|} }  \hfill \\
		+ \sum\limits_{k \in {\Lambda _2}} {\frac{1}{{\tilde \Delta \left( {||k||} \right)}}\sum\limits_{n =  - \infty }^{ + \infty } {\left| {\int_0^1 {\bar w\left( y \right){e^{2\pi iN\left( {k \cdot \rho  - n} \right)y}}dy} } \right|} }  \hfill \\
	\end{gathered}  \right)\\
	\label{P3--1}: = &{C_{f,\tilde \Delta }}\left( {{\tt{S}_1} + {\tt{S}_2}} \right)
	%\label{P3--1} \leqslant &{C_{f,\tilde \Delta }}\left( {\sum\limits_{k \in {\Lambda _1}} {\frac{1}{{\tilde \Delta \left( {\left\| k \right\|} \right)}}\sum\limits_{n = 0}^{N - 1} {\left| {\int_0^1 {\bar w\left( y \right){e^{2\pi iN\left( {k \cdot \rho  - n} \right)y}}dy} } \right|} }  + \sum\limits_{k \in {\Lambda _2}} {\frac{1}{{\tilde \Delta \left( {\left\| k \right\|} \right)}}\sum\limits_{{n^ * }} {\left| {\int_0^1 {\bar w\left( y \right){e^{2\pi iN\left( {k \cdot \rho  - {n^ * }} \right)y}}dy} } \right|} } } \right)
\end{align}
according to \eqref{poissionpoission} in the proof of Theorem \ref{T1}, where $ \tt{S}_1 $ and $ \tt{S}_2 $ represent the principal term and the remainder term, respectively.
\\
{\bf{Step2:}} For the principal term $ \tt{S}_1 $, we choose
\[{L_1} = {L_1}\left( {k,N} \right): = \left[ {{e^{ - 1}}{{\left( {\frac{{\Delta \left( {||k||} \right)}}{{2\pi \alpha N}}} \right)}^{ - 1/\beta }}} \right] \geqslant 2\]
for fixed $ k \in \Lambda_1 $ and $ N \in {\mathbb{N}^ + } $ sufficiently large, where $ \beta>0 $ is the absolute constant given in \eqref{wn}. One can verify that $ \mathop {\inf }\limits_{k \in {\Lambda _1},N \in {\mathbb{N}^ + }} {L_1} =  + \infty  $, which implies that the times of integration by parts become infinite when $ N \to +\infty$. Further, it follows that
\begin{equation}\label{mNguji}
	{\left( {\frac{{L_1^\beta \Delta \left( {||k||} \right)}}{{2\pi \alpha N}}} \right)^{{L_1}}} \leqslant C_{\alpha,\varphi,\Delta}\exp \left( { - {{\left( {\varphi \left( N \right)} \right)}^{\beta^*}}} \right)
\end{equation}
with $ \beta^*=(2\beta)^{-1}>0$ (also an absolute constant) for all $ k\in \Lambda_1 $. Note that $ L_1 \geqslant 2 $ as long as $ N $ sufficiently large. We therefore derive that
\begin{align}
	\label{S_1-1}{\tt{S}_1} &\leqslant \sum\limits_{k \in {\Lambda _1}} {\frac{1}{{\tilde \Delta \left( {||k||} \right)}}\left( \begin{gathered}
			{\left\| {{{\bar w}^{\left( {{L_1}} \right)}}} \right\|_{{L^1}\left( {0,1} \right)}}{\left( {\frac{{\Delta \left( {||k||} \right)}}{{2\pi \alpha N}}} \right)^{{L_1}}} \hfill \\
			+ 2{\left\| {{{\bar w}^{\left( {{L_1}} \right)}}} \right\|_{{L^1}\left( {0,1} \right)}}\sum\limits_{n = 0}^{ + \infty } {\frac{1}{{{{\left( {2\pi N\left( {n + 1/2} \right)} \right)}^{{L_1}}}}}}  \hfill \\
		\end{gathered}  \right)} \\
	\label{S_1-2}& \leqslant \sum\limits_{k \in {\Lambda _1}} {\frac{1}{{\tilde \Delta \left( {||k||} \right)}}\left( \begin{gathered}
			{\left\| {{{\bar w}^{\left( {{L_1}} \right)}}} \right\|_{{L^1}\left( {0,1} \right)}}{\left( {\frac{{\Delta \left( {||k||} \right)}}{{2\pi \alpha N}}} \right)^{{L_1}}} \hfill \\
			+ 2{\left\| {{{\bar w}^{\left( {{L_1}} \right)}}} \right\|_{{L^1}\left( {0,1} \right)}}{\left( {\frac{1}{{2\pi N}}} \right)^{{L_1}}}\sum\limits_{n = 0}^{ + \infty } {\frac{1}{{{{\left( {n + 1/2} \right)}^{{L_1}}}}}}  \hfill \\
		\end{gathered}  \right)} \\
	\label{S_1-3}&\leqslant {C_{\alpha,\varphi,\Delta}}\sum\limits_{k \in {\Lambda _1}} {\frac{1}{{\tilde \Delta \left( {||k||} \right)}}{{\left( {\frac{{L_1^\beta \Delta \left( {||k||} \right)}}{{2\pi \alpha N}}} \right)}^{{L_1}}}} \\
	\label{S_1-4}& \leqslant {{C_{\alpha,\varphi,\Delta} }}\sum\limits_{k \in {\Lambda _1}} {\frac{1}{{\tilde \Delta \left( {||k||} \right)}} \cdot \exp \left( { - {{\left( {\varphi \left( N \right)} \right)}^{\beta^*}}} \right)} \\
	& \leqslant {C_{\alpha,\varphi,\Delta}}\left( {\sum\limits_{0 \ne k \in {\mathbb{Z}^d}} {\frac{1}{{\tilde \Delta \left( {||k||} \right)}}} } \right) \cdot \exp \left( { - {{\left( {\varphi \left( N \right)} \right)}^{\beta^* }}} \right)\notag\\
	& \leqslant {C_{\alpha,\varphi,\Delta,\tilde \Delta,d}}\left( {\int_1^{ + \infty } {\frac{{{r^{d - 1}}}}{{\tilde \Delta \left( r \right)}}dr} } \right) \cdot \exp \left( { - {{\left( {\varphi \left( N \right)} \right)}^{\beta^* }}} \right)\notag\\
	\label{S_1-5}& \leqslant {C_{\alpha,\varphi,\Delta,\tilde \Delta,d}}\exp \left( { - {{\left( {\varphi \left( N \right)} \right)}^{\beta^* }}} \right).
\end{align}
Here \eqref{S_1-1} is same as \eqref{Nm} and \eqref{deltam} in the proof of Theorem \ref{T1}. \eqref{S_1-3} uses \eqref{wn}, and it shows that the second term in parentheses in \eqref{S_1-2} is relatively small compared to the first one. \eqref{S_1-4} uses \eqref{mNguji}, and finally \eqref{S_1-5} is because
\[\int_1^{ + \infty } {\frac{{{r^{d - 1}}}}{{\tilde \Delta \left( r \right)}}dr}  = \mathcal{O}\left( {\int_1^{ + \infty } {\frac{{{r^{d - 1}}\Delta^2 \left( r \right)}}{{\tilde \Delta \left( r \right)}}dr} } \right) = \mathcal{O}\left( 1 \right)\]
due to (\textbf{H3}) and  Cauchy's Theorem.
\\
{\bf{Step3:}} As to the  remainder term $ \tt{S}_2 $, similar to \eqref{Nm} and \eqref{deltam} with $ m=2 $ we arrive at
\begin{align}
	{\tt{S}_2} &\leqslant \sum\limits_{||k|| > {\Delta ^{ - 1}}\left( {2\pi \alpha N/\varphi \left( N \right)} \right)} {\frac{1}{{\tilde \Delta \left( {||k||} \right)}}\sum\limits_{n =  - \infty }^{ + \infty } {\left| {\int_0^1 {\bar w\left( y \right){e^{2\pi iN\left( {k \cdot \rho  - n} \right)y}}dy} } \right|} } \notag \\
	\label{jibiji}& \leqslant \frac{{{C_1}}}{N^2}\sum\limits_{||k|| > {\Delta ^{ - 1}}\left( {2\pi \alpha N/\varphi \left( N \right)} \right)} {\frac{{\Delta^2 \left( {||k||} \right)}}{{\tilde \Delta \left( {||k||} \right)}}} \\
	& \leqslant \frac{{{C_1} \cdot {C_{\alpha ,\Delta ,\tilde \Delta,\varphi }}}}{N^2}\int_{{\Delta ^{ - 1}}\left( {2\pi \alpha N/\varphi \left( N \right)} \right)}^{ + \infty } {\frac{{{r^{d - 1}}\Delta^2 \left( r \right)}}{{\tilde \Delta \left( r \right)}}dr}\notag \\
	\label{S_2-3}& \leqslant {C_1} \cdot {C_{\alpha ,\Delta ,\tilde \Delta,\varphi }}\int_{{\Delta ^{ - 1}}\left( {2\pi \alpha N/\varphi \left( N \right)} \right)}^{ + \infty } {\frac{{{r^{d - 1}}\Delta^2 \left( r \right)}}{{\tilde \Delta \left( r \right)}}dr}  \\
	\label{S_2-1}& \leqslant {C_1} \cdot {C_{\alpha ,\Delta ,\tilde \Delta,\varphi }}\exp(-cN),
\end{align}
where $ C_1>0 $ is the universal constant in Theorem \ref{T1}, and (\textbf{H3}) is used in \eqref{S_2-1}.
\\
{\bf{Step4:}} By substituting \eqref{S_1-5} and \eqref{S_2-1} into \eqref{P3--1} we immediately have
\[{\left\| {{\mathrm{\overline{WB}}_N}\left( f \right)\left( \theta  \right) - \int_{{\mathbb{T}^d}} {f( {\hat \theta } )d\hat \theta } } \right\|_\mathcal{B}} \leqslant {C_4}\exp \left( { - {{\left( {\varphi \left( N \right)} \right)}^{\beta^* }}} \right)\]
for $ N $ sufficiently large, provided a positive constant $ C_4>0 $ that only depends on $ {f,\alpha ,d,\Delta,\tilde{\Delta},\varphi},c $. Then we finish the proof of the discrete case \eqref{333-1}.

The analysis is similar for the continuous case \eqref{333-2}, and this proves Theorem \ref{youxianweirenyisulv}.

\subsection{Proof of Corollary \ref{jiexi+dio}}
At this point, we have $ \Delta \left( x \right) = {x^\tau } $ with $ \tau >d-1 $ and $ {\Delta ^{ - 1}}\left( x \right) = {x^{1/\tau }} $. In view the analyticity of $ f $, there exist $ c_f,\mu  > 0 $  such that $ | {{{\hat f}_k}} | \leqslant c_f{e^{ - 2\mu ||k||}} $ for all $ 0 \ne k \in {\mathbb{Z}^d} $, i.e., $ \tilde \Delta \left( x \right) = {e^{2\mu x}} $. However, (\textbf{H3}) does not hold. Recall Remark \ref{remark3.1}, we could choose an  appropriate adaptive function $ \varphi $ and slightly modify the proof of Theorem \ref{youxianweirenyisulv} (i.e., the analysis of $ \tt{S}_1 $ and $ \tt{S}_2 $) to obtain exponential convergence. Let $ \varphi \left( x \right) = {x^\varpi } $ with $ \varpi  = {\left( {1 + \tau {\beta _ * }} \right)^{ - 1}} \in \left( {0,1} \right) $, where $ \beta _ *>0 $ is the constant given in Theorem \ref{youxianweirenyisulv}. Then it follows that
\[\exp \left( { - {{\left( {\varphi \left( x \right)} \right)}^{{\beta ^*}}}} \right) = \exp \left( { - {x^{\varpi {\beta ^*}}}} \right) = \exp \left( { - {x^\xi }} \right)\]
in \eqref{S_1-5}, where $ \xi  = \beta ^*{\left( {1 + \tau {\beta _ * }} \right)^{ - 1}}\in (0,1) $, and
\begin{align*}
	\int_{{\Delta ^{ - 1}}\left( {2\pi \alpha x/\varphi \left( x \right)} \right)}^{ + \infty } {\frac{{{r^{d - 1}}\Delta^2 \left( r \right)}}{{\tilde \Delta \left( r \right)}}dr}  &= \int_{{{\left( {2\pi \alpha {x^{1 - \varpi }}} \right)}^{1/\tau }}}^{ + \infty } {\frac{{{r^{d + 2\tau  - 1}}}}{{{e^{2\mu r}}}}dr} \\
	& = \mathcal{O}\left( {\int_{{{\left( {2\pi \alpha {x^{1 - \varpi }}} \right)}^{1/\tau }}}^{ + \infty } {{e^{ - \mu r}}dr} } \right)\\
	& = \mathcal{O}\left( {\exp \left( { - \tilde c{x^{\left( {1 - \varpi } \right){\tau ^{ - 1}}}}} \right)} \right)\\
	& = \mathcal{O}\left( {\exp \left( { - \tilde c{x^\xi }} \right)} \right)
\end{align*}
in \eqref{S_2-3} with some $ \tilde{c}>0 $. Therefore Corollary \ref{jiexi+dio} is proved by \eqref{S_1-5}, \eqref{S_2-1} and  \eqref{P3--1}.

\subsection{Proof of Theorem \ref{wuqiongweirenyisulv}}
Recall \eqref{jibiji}, then the proof is the same as Theorem \ref{youxianweirenyisulv} due to (\textbf{H4}).

\subsection{Proof of Corollary \ref{wuqiongweidio}}
One just needs to verify (\textbf{H4}). We omit a few calculations here for brevity. Recall $ {{\tilde \Delta }_\infty }\left( x \right) = \exp \left( {\exp \left( x \right)} \right) $ and \eqref{diowuqiong}. Note Lemma \ref{5.2} with $ 2\leqslant \mu = \eta \in \mathbb{N}^+ $ implies that $ {\tt{d}}(x) = \mathcal{O}(e^x) $, and thus $ \log x = \mathcal{O}\left( {{{\tt{d}}^{ - 1}}\left( x \right)} \right) $. Let $ \varphi(x)=\sqrt{x} $, we therefore derive that
\begin{align}
	\sum\limits_{{{\left| k \right|}_\eta } \geqslant {{\tt{d}}^{ - 1}}\left( 2\pi \gamma x/\varphi \left( x \right) \right)} {\frac{{\tt{d}}^2\left({\left| k \right|}_\eta\right)}{{{{\tilde \Delta }_\infty }\left( {{{\left| k \right|}_\eta }} \right)}}}  &= \sum\limits_{\nu  \geqslant {{\tt{d}}^{ - 1}}\left(  2\pi \gamma x/\varphi \left( x \right)  \right)} {\frac{{\tt{d}}^2(v)}{{{{\tilde \Delta }_\infty }\left( \nu  \right)}}\left( {\sum\limits_{0 \ne k \in \mathbb{Z}_ * ^\infty ,{{\left| k \right|}_\eta } = \nu } 1 } \right)} \notag \\
	&=\mathcal{O}\left( {\sum\limits_{v \geqslant \tilde c\log x} {\frac{{{e^{2v}}}}{{\exp \left( {{e^v}} \right)}} \cdot {v^{{\nu ^{1/\eta }}}}} } \right)\notag \\
	& = \mathcal{O}\left( {\sum\limits_{v \geqslant \tilde c\log x} {\frac{1}{{\exp \left( {{e^v}/2} \right)}}} } \right)\notag \\
	& = \mathcal{O}\left( {{e^{ - \tilde cx}}} \right)\notag,
\end{align}
provided a universal constant $ \tilde c>0 $,  and \eqref{hhhhh}, \eqref{sssss} are used here. One notices that the convergence rate at this point is of exponential's type, i.e., $ \mathcal{O}(\exp(-N^{\upsilon })) $ and $ \mathcal{O}(\exp(-T^{\upsilon })) $ with some $ \upsilon>0 $ according to Theorem \ref{wuqiongweirenyisulv}, we therefore finish the proof.

\subsection{Proof of Theorems \ref{without1} and \ref{without2}}
Consider Theorem \ref{without1}. Note that there are no small divisors at this point, we therefore could slightly modify the proof of Theorem \ref{youxianweirenyisulv}.  Specifically,  $ \Lambda_1 $ is a finite set and $ \Lambda_2=\phi $ as long as $ N $ is sufficiently large,  we thus only need to estimate the principal term $ \tt{S}_1 $. The convergence rate is indeed exponential through  the same technique of integration by parts, see \eqref{mNguji}. This gives the proof.

As to Theorem \ref{without2}, the proof is similar since the  estimates obtained by integration by parts are exponentially small for all $ 0 \ne k \in \mathbb{Z} $ as long as $ T $ is sufficiently large, and the universal coefficient will be guaranteed boundedness by \eqref{without2-1}, we therefore finish the proof.

\section{Appendix}
\begin{lemma}[Poisson summation formula] \label{po}
	For each $ h\left( x \right) \in {L^2}\left( \mathbb{R} \right) $,  there holds
	\begin{equation}\notag
		\sum\limits_{n \in \mathbb{Z}} {h\left( n \right)}  = \sum\limits_{n \in \mathbb{Z}} {\int_{ - \infty }^{ + \infty } {h\left( x \right){e^{ - 2\pi nix}}dx} } .
	\end{equation}
\end{lemma}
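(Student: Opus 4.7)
The plan is to follow the classical route via periodization. First I would define the $1$-periodic function $H(x) := \sum_{n \in \mathbb{Z}} h(x+n)$ formed by summing the integer translates of $h$, and check that this series makes sense (e.g.\ converges in $L^2([0,1])$ under the stated hypotheses, after a suitable density/approximation argument — this is the soft analytic point that typically requires either Schwartz-type decay or convergence of the series in a suitable norm). Once $H$ is available as an element of $L^2([0,1])$, the rest is mechanical.

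Next I would expand $H$ in its Fourier series on $[0,1]$, namely $H(x) = \sum_{k \in \mathbb{Z}} \widehat{H}_k\, e^{2\pi i k x}$, and compute the coefficients by an interchange of sum and integral (Fubini/Tonelli applied to $|h|$ on $\mathbb{R}$ after unwinding the periodization):
\begin{align*}
\widehat{H}_k \;=\; \int_0^1 H(x)\, e^{-2\pi i k x}\, dx \;=\; \sum_{n \in \mathbb{Z}} \int_0^1 h(x+n)\, e^{-2\pi i k x}\, dx \;=\; \int_{-\infty}^{+\infty} h(x)\, e^{-2\pi i k x}\, dx.
\end{align*}
Thus $\widehat{H}_k$ is exactly the $k$-th summand on the right-hand side of the claimed identity.

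Finally I would evaluate the Fourier expansion at $x=0$. On the one hand, $H(0) = \sum_{n \in \mathbb{Z}} h(n)$, which is the left-hand side. On the other hand, the Fourier expansion at $x=0$ produces $\sum_{k \in \mathbb{Z}} \widehat{H}_k = \sum_{k \in \mathbb{Z}} \int_{-\infty}^{+\infty} h(x)\, e^{-2\pi i k x}\, dx$, matching the right-hand side. This completes the equality.

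The main obstacle is the rigorous justification of the pointwise evaluation $H(0)$ and of the Fourier inversion at a single point, because in pure $L^2$ this is only meaningful almost everywhere; the cleanest fix is to prove the formula first for test functions (Schwartz class, where absolute convergence is automatic and every manipulation is trivial) and then extend to the class of $h$ actually used in the body of the paper by density or dominated convergence. In the applications within the paper the relevant $h$ is of the form $h(x) = \bar w(x/N)\, e^{2\pi i x k\cdot \rho}$, which is compactly supported and smooth, so all interchanges are trivially valid and the lemma applies directly.
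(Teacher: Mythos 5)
The paper does not prove this lemma; it simply cites Chapter~3 of Grafakos's \emph{Classical Fourier Analysis}, where the result is established by exactly the periodization argument you describe: form $H(x)=\sum_n h(x+n)$, compute $\widehat{H}_k=\widehat{h}(k)$ by unwinding the sum, and evaluate the Fourier series of $H$ at $x=0$. So your outline matches the cited proof. You are also right to flag the real issue, which is not a gap in your argument but in the lemma as stated: for a generic $h\in L^2(\mathbb{R})$, neither $\sum_n h(n)$ nor $\sum_n \widehat{h}(n)$ is even well defined (pointwise values are only a.e.\ data, and neither series need converge), so ``$h\in L^2$'' is not a sufficient hypothesis and the textbook reference in fact imposes stronger decay on $h$ and $\widehat h$. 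Your closing observation is the correct repair: in the paper's application $h(x)=w(x/N)e^{2\pi i x\,k\cdot\rho}$ is smooth and compactly supported, so the periodization converges absolutely, $H$ is smooth and $1$-periodic, the Fourier series converges everywhere, and every interchange is justified. In short, the proposal is correct and identical in method to the reference the paper defers to; the only thing I would add is that the lemma's hypothesis should be strengthened (e.g.\ to compactly supported $C^2$, or Schwartz, or the decay condition in Grafakos) for the statement to be literally true.
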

\begin{proof}
	See Chapter 3 in \cite{MR2445437} for  details.
\end{proof}

\begin{lemma}\label{5.2}
	For arbitrary given $ \rho_*  > 0$ and $\mu  \in {\mathbb{N}^ + } $, there exists $ \tau  = \tau \left( {\eta ,\mu } \right) > 0 $ such that
	\[ \prod\limits_{j \in \mathbb{N}} {\left( {1 + {{\left| {{k_j}} \right|}^\mu }{{\left\langle j \right\rangle }^\mu }} \right)}  \leqslant \exp \left( {\frac{\tau }{{{\rho_* ^{1/\eta }}}}\log \left( {\frac{\tau }{\rho_* }} \right)} \right) \cdot {e^{\rho_* {{\left| k \right|}_\eta }}}.\]
\end{lemma}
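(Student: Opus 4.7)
My plan is to take logarithms and establish the equivalent additive estimate
\[
\sum_{j\,:\,k_j \neq 0} \log\bigl(1 + |k_j|^\mu \langle j \rangle^\mu\bigr) \leq \frac{\tau}{\rho_*^{1/\eta}}\log\!\left(\frac{\tau}{\rho_*}\right) + \rho_* |k|_\eta.
\]
Since $k \in \mathbb{Z}_*^\infty$, only finitely many indices contribute, and on the active set each $|k_j|\geq 1$ and $\langle j\rangle\geq 1$, so $|k_j|^\mu\langle j\rangle^\mu\geq 1$. The elementary bound $\log(1+y)\leq \log 2+\log y$ for $y\geq 1$ then yields
\[
\log\bigl(1+|k_j|^\mu\langle j\rangle^\mu\bigr)\leq \log 2 + \mu\log|k_j|+\mu\log\langle j\rangle.
\]

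I would next apply the tangent-line (Young-type) inequality $\log t \leq st - 1 - \log s$, valid for all $s,t > 0$, to the term $\mu\log|k_j|$, choosing $s = \rho_* \langle j\rangle^\eta/\mu$. After collecting terms this produces
\[
\log\bigl(1+|k_j|^\mu\langle j\rangle^\mu\bigr)\leq \rho_* |k_j|\langle j\rangle^\eta + A - B\log\langle j\rangle,
\]
where $A:=\log 2+\mu\log(\mu/\rho_*)-\mu$ and $B:=\mu(\eta-1)>0$ (recall $\eta\geq 2$). Summing the linear-in-$|k_j|$ terms over $j\in S:=\{j:k_j\neq 0\}$ reproduces exactly $\rho_*|k|_\eta$ on the right, and it remains to bound the entropy contribution $\sum_{j\in S}(A - B\log\langle j\rangle)$ by the claimed constant $\tau\rho_*^{-1/\eta}\log(\tau/\rho_*)$.

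For this last step I would split $S$ at a threshold $J_0=J_0(\rho_*,\mu,\eta)$ chosen so that $B\log J_0 \geq A$: indices with $\langle j\rangle > J_0$ contribute non-positively and can be discarded, while the at most $J_0+1$ indices with $\langle j\rangle\leq J_0$ contribute at most $(J_0+1)A$. Optimizing $J_0$ against the Stirling-type lower bound $\sum_{j=0}^{N-1}\log\langle j\rangle \geq cN\log N$ produces a bound of the required shape, after which exponentiating both sides finishes the proof. The main obstacle will be the precise bookkeeping that gives the exponent $\rho_*^{-1/\eta}$ rather than the naive $\rho_*^{-1/(\eta-1)}$ one reads off from the threshold alone: this requires the Stirling bound to be applied globally, so that the $\log(\mu/\rho_*)$ growth in $A$ cancels against $B\log\langle j\rangle$ summed across \emph{all} of $S$ and not merely its tail. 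As a useful preliminary reduction I would first invoke Bernoulli's inequality $(1+uv)^\mu\geq 1+u^\mu v^\mu$ (for $\mu\geq 1$) to replace the product by its $\mu=1$ version, which removes the exponent $\mu$ from the inner geometry and makes the Young step purely one-dimensional in the variable $|k_j|\langle j\rangle$.
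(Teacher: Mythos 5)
Your setup (the $\log(1+y)\leq\log 2+\log y$ reduction and the Young inequality leading to the entropy sum $\sum_{j\in S}(A-B\log\langle j\rangle)$) is fine, and since the paper only cites Lemma B.1 of Montalto--Procesi and Lemma 7.2 of Biasco--Massetti--Procesi for the proof, reconstructing one is worthwhile; but the Stirling refinement you propose in the final paragraph does \emph{not} repair the exponent. Write $M:=e^{A/B}$, so that the maximizing support is $S_0=\{j:\langle j\rangle<M\}$, with $\#S_0\approx M+1$. Then $\sum_{j\in S_0}\log\langle j\rangle=\log(\lfloor M\rfloor !)\approx M\log M-M$, and since $\log M=A/B$ the leading terms $MA$ and $BM\log M$ cancel exactly, leaving $\sum_{j\in S_0}(A-B\log\langle j\rangle)\approx A+BM$. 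The global Stirling bound therefore only strips the $\log(\mu/\rho_*)$ prefactor off your $(J_0+1)A$; the surviving term is still $BM=Be^{A/B}\sim\mu(\eta-1)(\mu/\rho_*)^{1/(\eta-1)}$, with the same polynomial exponent $1/(\eta-1)$ rather than the required $1/\eta$. For $\eta\geq 2$ these rates genuinely differ as $\rho_*\to 0$, so no amount of careful bookkeeping on the Stirling side alone can supply the claim.

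The missing idea is that $|k_j|\geq 1$ on the support must be exploited a \emph{second} time, inside the weight and not just in the $\log(1+y)$ step. Run Young with only half the budget, $s_j=\frac{\rho_*}{2\mu}\langle j\rangle^{\eta-1}$, yielding $\log(1+|k_j|^\mu\langle j\rangle^\mu)\leq\frac{\rho_*}{2}\langle j\rangle^\eta|k_j|+A'-B\log\langle j\rangle$ with $A'=\log 2-\mu+\mu\log(2\mu/\rho_*)$; then, because $|k_j|\geq 1$, replace $\frac{\rho_*}{2}\langle j\rangle^\eta|k_j|$ by $\rho_*\langle j\rangle^\eta|k_j|-\frac{\rho_*}{2}\langle j\rangle^\eta$. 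Summing over $j\in S$ reproduces $\rho_*|k|_\eta$ plus $\sum_{j\in S}\bigl(A'-B\log\langle j\rangle-\frac{\rho_*}{2}\langle j\rangle^\eta\bigr)$, and the new term $-\frac{\rho_*}{2}\langle j\rangle^\eta$ now truncates the contributing indices at $\langle j\rangle\lesssim(\rho_*^{-1}\log(1/\rho_*))^{1/\eta}$ instead of $\rho_*^{-1/(\eta-1)}$; this truncation, not Stirling, is the source of the $\rho_*^{-1/\eta}$ power, and Stirling is then used only to control the residual sum. So your overall blueprint is correct, but the essential step you omit is paying for each active coordinate with a piece of the $\rho_*$-budget.
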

\begin{proof}
	See details in Lemma B.1 in \cite{MR4201442} and  Lemma 7.2 in \cite{MR4091501}.
\end{proof}

\begin{lemma}\label{Lemma5.3}
	Define 
	\[\bar w\left( x \right): = {\left( {\int_0^1 {\exp \left( { - {s^{ - 1}}{{\left( {1 - s} \right)}^{ - 1}}} \right)ds} } \right)^{ - 1}} \cdot \exp \left( { - {x^{ - 1}}{{\left( {1 - x} \right)}^{ - 1}}} \right)\]
	 on $ \left( {0,1} \right) $. Then the following holds with  $ C_*={\left( {\int_0^1 {\exp \left( { - {s^{ - 1}}{{\left( {1 - s} \right)}^{ - 1}}} \right)ds} } \right)^{ - 1}}>0 $, where  $  \beta>1  $ is some  universal absolute constant:
	\begin{equation}\label{L1}
		\int_0^1 {\left| {{{\bar w}^{\left( n \right)}}\left( x \right)} \right|dx}  \leqslant C_*{ n ^{\beta n}},\;\;n \geqslant 2.
	\end{equation}
\end{lemma}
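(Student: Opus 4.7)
My plan is to reduce the $L^1$ bound in \eqref{L1} to a uniform pointwise bound on $|\bar w^{(n)}(x)|$ obtained via Cauchy's integral formula, exploiting the fact that $\bar w(z)=C_*\exp(\phi(z))$ with $\phi(z)=-1/(z(1-z))$ extends holomorphically to $\mathbb{C}\setminus\{0,1\}$.

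For $x\in(0,1/2]$, I would work on the Cauchy disk $D_x=\{z\in\mathbb{C}:|z-x|\leq x/3\}$, whose radius is comparable to the distance from $x$ to the nearest singularity. Writing $z=u+iv$, a direct computation gives
\[
\mathrm{Re}\,\phi(z)=-\frac{u(1-u)+v^2}{\bigl(u(1-u)+v^2\bigr)^2+v^2(1-2u)^2}=-\frac{\mathrm{Re}(z(1-z))}{|z(1-z)|^2}.
\]
On $D_x$ one has $u\in[2x/3,4x/3]$ and $|v|\leq x/3$; since $u\leq2/3$ for $x\leq 1/2$, these give $u(1-u)\geq u/3\geq 2x/9$ and $|z(1-z)|^2\leq 4x^2$. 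Hence $\mathrm{Re}\,\phi(z)\leq-c_0/x$ on $D_x$ for an absolute constant $c_0>0$, so that $|\bar w(z)|\leq C_*e^{-c_0/x}$ on $\partial D_x$.

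Cauchy's integral formula then yields
\[
|\bar w^{(n)}(x)|\leq\frac{n!}{(x/3)^n}\sup_{z\in\partial D_x}|\bar w(z)|\leq C_*\,3^n\,n!\,x^{-n}\,e^{-c_0/x}.
\]
The $x$-dependent factor $x^{-n}e^{-c_0/x}$ is maximized at $x=c_0/n$ (via $y=c_0/x$), with maximum value $(n/c_0)^ne^{-n}$; combined with Stirling's bound $n!\leq C\sqrt{n}(n/e)^n$, this produces
\[
\sup_{x\in(0,1/2]}|\bar w^{(n)}(x)|\leq C_*\,K^n n^{2n}
\]
for some absolute $K>0$. By the symmetry $\bar w(1-x)=\bar w(x)$ the same bound holds on $[1/2,1)$, so integrating over $[0,1]$ gives $\int_0^1|\bar w^{(n)}(x)|\,dx\leq C_*K^nn^{2n}$. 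For any absolute $\beta>2$ and $n\geq K^{1/(\beta-2)}$ one has $K^n\leq n^{(\beta-2)n}$, hence $K^nn^{2n}\leq n^{\beta n}$; the finitely many remaining small $n$ are absorbed into the constant $C_*$, proving \eqref{L1}.

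The main technical point is calibrating the Cauchy radius: it must be small enough (comparable to $x$) that the rapid decay $|\bar w(z)|\lesssim e^{-c/x}$ survives on the whole circle $\partial D_x$, yet not so small that the factor $(1/r)^n$ in Cauchy's formula swamps that decay. The choice $r\asymp x$ matches the natural length scale on which $\phi$ varies near $0$, and once it is fixed, the optimization in $x$ and Stirling's formula mechanically produce the announced Gevrey-$2$ bound, which is in fact stronger than the claimed $\beta>1$.
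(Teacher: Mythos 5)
Your argument is correct and takes a genuinely different route from the paper's. The paper factorizes $\bar w(x)=C_*P(x)P(1-x)$ with $P(x)=e^{-1/x}$, proves by induction that $P^{(n)}(x)=\bigl(x^{-2n}+a^{(n)}_{2n-1}x^{-(2n-1)}+\cdots+a^{(n)}_1 x^{-1}\bigr)e^{-1/x}$ with the coefficients bounded by $b_n\leq 8^n(n!)^2$, derives separate sup and $L^1$ estimates for $P^{(n)}$ on the two halves of $(0,1)$, and finally assembles these via the Leibniz rule and Stirling's formula to reach $\beta>6$. You instead exploit holomorphy of $\bar w$ on $\mathbb{C}\setminus\{0,1\}$ and Cauchy's integral formula on disks of radius $x/3$ centered at $x$: the estimate $\mathrm{Re}\,\phi(z)\leq -c_0/x$ on $\partial D_x$ (which I checked, with $c_0=1/18$) followed by the one-variable optimization of $x^{-n}e^{-c_0/x}$ and Stirling gives a uniform pointwise bound $\sup_x|\bar w^{(n)}(x)|\lesssim K^n n^{2n}$, whence any $\beta>2$ works. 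This is both cleaner (no coefficient bookkeeping, no Leibniz rule) and sharper (Gevrey exponent $2$ instead of $6$), at the modest cost of invoking complex analysis rather than purely elementary calculus. The one small slip is in the last sentence: since the lemma fixes $C_*$ explicitly as the normalizing constant of $\bar w$, the finitely many small $n$ and the Stirling prefactor $C\sqrt{n}$ cannot be absorbed into $C_*$; they must instead be absorbed by enlarging $\beta$ (which $n^{\beta n}$ permits for $n\geq 2$, since $n^{\beta n}\to\infty$ as $\beta\to\infty$), exactly as the paper does. With that correction the proof is complete, and your calibration of the Cauchy radius at the scale $\asymp x$ is precisely the right choice.
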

\begin{proof}
	We're going to prove \eqref{L1} in four steps.
	\\
	{\bf{Step1:}} Define $ P\left( x \right): = {e^{ - \frac{1}{x}}} $. Then it follows that $ \bar w\left( x \right) = C_*P\left( x \right)P\left( {1 - x} \right)$ and $ \left({P\left( x \right)}\right)^{\left( 1 \right)} = \frac{1}{{{x^2}}}{e^{ - \frac{1}{x}}} $.
	One can verify the following by induction:
	\begin{equation}\label{l1}
		\left({P\left( x \right)}\right)^{\left( n \right)} = \left( {\frac{1}{{{x^{2n}}}} + \frac{{a_{2n - 1}^{\left( n \right)}}}{{{x^{2n - 1}}}} +  \cdots  + \frac{{a_1^{\left( n \right)}}}{{{x^1}}}} \right){e^{ - \frac{1}{x}}},\;\;n \geqslant 1,
	\end{equation}
	where $ a_j^{\left( n \right)} \in \mathbb{Z} $ for all $ 1 \leqslant j \leqslant 2n - 1 $, and we define $ a_{2n}^{\left( n \right)}: = 1 $. At this point, denote $ {b_n}: = \mathop {\max }\limits_{1 \leqslant j \leqslant 2n} \left| {a_j^{\left( n \right)}} \right| \in {\mathbb{N}^ + } $, then $ b_1=1 $. In view of \eqref{l1}, we get
	\[{b_{n + 1}} \leqslant 2n \cdot 4n \cdot \mathop {\max }\limits_{1 \leqslant j \leqslant 2n} \left| {a_j^{\left( n \right)}} \right| = 8{n^2}{b_n},\]
	since when taking the derivative of \eqref{l1},  there are $ 4n $ terms that haven't been combined yet. Therefore, we have
	\begin{equation}\label{bbbbb}
		{b_n} \leqslant \prod\limits_{j = 1}^n {{{\left( {8j} \right)}^2}}  \cdot {b_1} = {8^n}{\left( {n!} \right)^2},\;\;n \geqslant 1.
	\end{equation}
	\\
	{\bf{Step2:}} Note that
	\[\mathop {\sup }\limits_{1/2 < s < 1} \left| {P{{\left( s \right)}^{\left( 0 \right)}}} \right| = \mathop {\sup }\limits_{1/2 < s < 1} {e^{ - \frac{1}{s}}} = {e^{ - 1}}.\]
	For all $ \;n \geqslant 1 $, by using \eqref{bbbbb} we get
	\begin{align}
		\mathop {\sup }\limits_{1/2 < s < 1} \left| {{{\left(P(s)\right)}^{\left( n \right)}}} \right| &= \mathop {\sup }\limits_{1/2 < s < 1} \left| {\left( {\frac{1}{{{s^{2n}}}} + \frac{{a_{2n - 1}^{\left( n \right)}}}{{{s^{2n - 1}}}} +  \cdots  + \frac{{a_1^{\left( n \right)}}}{{{s^1}}}} \right){e^{ - \frac{1}{s}}}} \right|\notag \\
		& \leqslant \mathop {\sup }\limits_{1/2 < s < 1} 2n \cdot \frac{{{b_n}}}{{{s^{2n}}}}{e^{ - \frac{1}{s}}}\notag \\
		& \leqslant 2n \cdot {4^n} \cdot {8^n}{\left( {n!} \right)^2}\notag \\
		&\leqslant {2^{6n }}{\left( {n!} \right)^2}.
	\end{align}
	Then we arrive at
	\begin{equation}\label{Stpe2}
		\mathop {\sup }\limits_{1/2 < s < 1} \left| {{{\left(P(s)\right)}^{\left( n \right)}}} \right| \leqslant {2^{6n }}{\left( {n!} \right)^2}, \;\; n \geqslant 0.
	\end{equation}
	\\
	{\bf{Step3:}} For $ n=0 $, we have
	\begin{equation}\label{ndengyu0}
		\int_0^{\frac{1}{2}} {\left| {{{\left( {P\left( x \right)} \right)}^{\left( 0 \right)}}} \right|dx}  = \int_0^{\frac{1}{2}} {{e^{ - \frac{1}{x}}}dx}  = \int_2^{ + \infty } {\frac{1}{{{y^2}}}{e^{ - y}}dy}  \leqslant 4\int_2^{ + \infty } {{e^{ - y}}dy}  = \frac{4}{{{e^2}}}.
	\end{equation}
	As to $ n \geqslant 1 $, by using \eqref{bbbbb} we have
	\begin{align}
		\int_0^{\frac{1}{2}} {\left| {{{\left( {P\left( x \right)} \right)}^{\left( n \right)}}} \right|dx}  &= \int_0^{\frac{1}{2}} {\left| {\left( {\frac{1}{{{x^{2n}}}} + \frac{{a_{2n - 1}^{\left( n \right)}}}{{{x^{2n - 1}}}} +  \cdots  + \frac{{a_1^{\left( n \right)}}}{{{x^1}}}} \right){e^{ - \frac{1}{x}}}} \right|dx} \notag \\
		& \leqslant \int_0^{\frac{1}{2}} {\frac{{2n \cdot {b_n}}}{{{x^{2n}}}}{e^{ - \frac{1}{x}}}dx} \notag \\
		& = n{2^{3n+1}}{\left( {n!} \right)^2}\int_2^{ + \infty } {{y^{2n - 2}}{e^{ - y}}dy} \notag \\
		& \leqslant n{2^{3n+1}}{\left( {n!} \right)^2}\int_0^{ + \infty } {{y^{2n - 2}}{e^{ - y}}dy} \notag \\
		& = n{2^{3n+1}}{\left( {n!} \right)^2} \cdot \left( {2n - 2} \right)!\notag \\
		\label{ndayu1}& \leqslant {2^{3n+1}}{\left( {n!} \right)^2}\left( {2n} \right)!.
	\end{align}
	\\
	{\bf{Step4:}} In view of \eqref{Stpe2}, \eqref{ndengyu0} \eqref{ndayu1} and the Stirling formula $ n! \sim \sqrt {2\pi n} {\left( {n/e} \right)^n} $, we finally arrive at
	\begin{align*}
		\int_0^1 {\left| {{{\bar w}^{\left( n \right)}}\left( x \right)} \right|dx} &=2\int_0^{\frac{1}{2}} {\left| {{{\bar w}^{\left( n \right)}}\left( x \right)} \right|dx}\\
		 &= 2C_*\int_0^{\frac{1}{2}} {\left| {\sum\limits_{i = 0}^n {C_n^i{{\left( {P\left( x \right)} \right)}^{\left( i \right)}}{{\left( {P\left( {1 - x} \right)} \right)}^{\left( {n - i} \right)}}} } \right|dx} \\
		& \leqslant 2C_*\sum\limits_{i = 0}^n {C_n^i} \int_0^{\frac{1}{2}} {\left| {{{\left( {P\left( x \right)} \right)}^{\left( i \right)}}} \right| \cdot \left| {{{\left( {P\left( {1 - x} \right)} \right)}^{\left( {n - i} \right)}}} \right|dx} \\
		& \leqslant 2C_*\sum\limits_{i = 0}^n {C_n^i} \int_0^{\frac{1}{2}} {\left| {{{\left( {P\left( x \right)} \right)}^{\left( i \right)}}} \right| \cdot \mathop {\sup }\limits_{1/2 < s < 1} \left| {\left(P{{\left( s \right)}}\right)^{\left( {n - i} \right)}} \right|dx} \\
		& \leqslant C_*{2^{6n+1}}{\left( {n!} \right)^2}\sum\limits_{i = 0}^n {C_n^i} \int_0^{\frac{1}{2}} {\left| {{{\left( {P\left( x \right)} \right)}^{\left( i \right)}}} \right|dx} \\
		& \leqslant C_*{2^{6n+1}}{\left( {n!} \right)^2}\left( {\frac{4}{{{e^2}}} + \sum\limits_{i = 1}^n {C_n^i} \int_0^{\frac{1}{2}} {\left| {{{\left( {P\left( x \right)} \right)}^{\left( i \right)}}} \right|dx} } \right)\\
		& \leqslant C_*{2^{6n+1}}{\left( {n!} \right)^2}\left( {\left( {\sum\limits_{i = 0}^n {C_n^i} } \right) \cdot {2^{3n+1}}{{\left( {n!} \right)}^2}\left( {2n} \right)!} \right)\\
		& = C_*{2^{10n+2}}{\left( {n!} \right)^4}\left( {2n} \right)!\\
		& \leqslant C_*{2^{10n+2}}{\left( {\frac{{{n^n}}}{{{e^n}}}} \right)^4}\left( {\frac{{{{\left( {2n} \right)}^{2n}}}}{{{e^{2n}}}}} \right)\\
		& \leqslant C_*{2^{10n+2}}\left( {\frac{{{n^{4n}}}}{{{2^{4n}}}}} \right)\left( {\frac{{{2^{2n}}{n^{2n}}}}{{{2^{2n}}}}} \right)\\
		& \leqslant C_*{2^{6n + 2}}{n^{6n}}\\
		& \leqslant C_* {n^{\beta n}}
		.
	\end{align*}
	This proves \eqref{L1} as long as if we choose $ \beta>6 $ independent of $ \bar w $ sufficiently large.

\end{proof}

\section*{Acknowledgments}
This work was supported by National Basic Research Program of China (grant No. 2013CB834100), National Natural Science Foundation of China (grant No. 11571065, 11171132, 12071175), Project of Science and Technology Development of Jilin Province, China (grant No. 2017C028-1, 20190201302JC), and Natural Science Foundation of Jilin Province (grant No. 20200201253JC).

\end{document}